\documentclass[Final,10pt,reqno]{amsart}
\usepackage{geometry}
\geometry{paperwidth=176mm, paperheight=250mm, textheight=189mm,
	tmargin=35mm, lmargin=30mm, rmargin=20mm, headsep=8mm,
	headheight=11.0pt, footskip=30pt, twoside=true}
\usepackage{amssymb}
\usepackage{graphicx}
\usepackage[all,cmtip]{xy}
\usepackage{amsmath}
\usepackage{dsfont}
\usepackage{mathtools}
\usepackage{extarrows}
\usepackage{tikz-cd}

\usepackage[colorlinks=true,linkcolor=blue,citecolor={blue}]{hyperref}
\newcommand{\comment}[1]{}
\usepackage{multirow}
\usepackage{tabularx}
\usepackage{mathrsfs}
\newtheorem{thm}{Theorem}[section]
\newtheorem{prop}{Proposition}[section]
\newtheorem{defi}{Definition}[section]
\newtheorem{lem}{Lemma}[section]
\newtheorem{cor}{Corollary}[section]
\newtheorem{ex}{Example}[section]

\newcommand{\ro}{\circ}

\renewcommand{\1}{\langle}
\renewcommand{\2}{\rangle}
\newcommand{\met}{\langle\cdot\,,\cdot\rangle}

\newcommand{\rel}{\mathds{R}}
\newcommand{\intg}{\mathds{Z}}

\newcommand{\n}{\nabla}

\newcommand{\w}{\omega}
\newcommand{\et}{\quad \textnormal{and}\quad}

\newcommand{\id}{\operatorname{Id}}

\newcommand{\el}{\operatorname{L}}

\newcommand{\tr}{\operatorname{tr}}

\newcommand{\A}{\mathbf{A}}

\newcommand{\homo}{\operatorname{Hom}}
\newcommand{\End}{\operatorname{End}}

\newcommand{\hol}{\operatorname{Hol}}

\newcommand{\g}{\mathfrak{g}}

\newcommand{\h}{\mathfrak{h}}
\newcommand{\m}{\mathfrak{m}}

\newcommand{\gl}{\mathfrak{gl}}
\newcommand{\so}{\mathfrak{so}}
\newcommand{\p}{\mathfrak{p}}

\newcommand{\hola}{\mathfrak{hol}}

\newcommand{\Ad}{\operatorname{Ad}}
\newcommand{\ad}{\operatorname{ad}}
\newcommand{\Span}{\operatorname{span}}

\newcommand{\aff}{\operatorname{Aff}}
\newcommand{\GL}{\operatorname{GL}}
\newcommand{\SO}{\operatorname{SO}}
\newcommand{\SL}{\operatorname{SL}}
\newcommand{\T}{\operatorname{T}}

\numberwithin{equation}{section}

   \title{Special Affine Connections on Symmetric Spaces}
    \author{Othmane Dani \and Abdelhak Abouqateb}           
\address{Cadi Ayyad University, Faculty of Sciences and Technologies, Department of Mathematics B.P.549 Gueliz Marrkesh. Morocco\newline	a.abouqateb@uca.ac.ma and othmanedani@gmail.com}

\date{\today}
\begin{document}

\maketitle

	{\bf Abstract.} Let $(G,H,\sigma)$ be a symmetric pair and $\g=\m\oplus\h$ the canonical decomposition of the Lie algebra $\g$ of $G$. We denote by $\n^0$ the canonical affine connection on the symmetric space $G/H$. A torsion-free $G$-invariant affine connection on $G/H$ is called special if it has the same curvature as $\n^0$. A special product on $\m$ is a commutative, associative, and $\Ad(H)$-invariant product. We show that there is a one-to-one correspondence between the set of special affine connections on $G/H$ and the set of special products on $\m$. We introduce a subclass of symmetric pairs called strongly semi-simple for which we prove that the canonical affine connection on $G/H$ is the only special affine connection, and we give many examples. We study a subclass of commutative, associative algebra which allows us to give examples of symmetric spaces with special affine connections. Finally we compute the holonomy Lie algebra of special affine connections. 
	\vskip 0.2cm
	
	\noindent{\bf Mathematics Subject Classification 2010:} 17B05, 53C35, 17B63, 17B60, 53C07. 
	 {\bf Keywords:}  Lie algebras, Symmetric spaces, Poisson algebras, Jordan Algebras,
	Special connections. \\
	
	\vspace{1cm}	
\section{Introduction and main results}
In the literature, symmetric spaces are described in a variety of ways. An affine symmetric space in differential geometry is a connected smooth manifold $M$ endowed with an affine connection $\nabla$ such that for each point $p\in M$ there is an affine transformation $\mathfrak{s}_p\in\aff(M,\nabla)$ which fixes $p$ and reverses every geodesic through $p$. On the other hand, in Lie theoretically terms, a symmetric pair is a triple $(G,H,\sigma)$ with $G$ is a connected Lie group, $H$ a closed subgroup of $G$ and $\sigma$ an involutive automorphism  of $G$ such that $G_\sigma^0\subseteq H\subseteq G_\sigma,$ where $G_\sigma$ is the fixed-point subgroup of $\sigma$ and $G_\sigma^0$ its identity compenent. The homogeneous $G$-space $G/H$ is called the corresponding symmetric space. Symmetric spaces play important roles for various branches of mathematics, namely Riemannian geometry, Lie groups theory, harmonic analysis and so on (see for instance \cite{Bertram,Helg,Kob1,Loos,Post}). It is worth noting that for any connected Lie group $G$, we can associate the natural symmetric pair $(G\times G,G,\sigma)$, where $\sigma(a,b)=(b,a) $, and in this case, the corresponding symmetric space $G\times G/G$ is identified with the $G\times G$-homogeneous space $M:=G$ (where the transitive action of $G\times G$ on $G$  is given by $(a,b)\cdot x:=axb^{-1}$ for $a,b,x\in G$). This is why our objective in this article is to describe algebraically a class of invariant affine connections on symmetric spaces which have been explored in \cite{BenBo6} in the particular case of Lie groups. Therefore, our current work could be seen as a natural sequel to the work present in \cite{BenBo6}, where it is proved that if $\g$ is a semi-simple Lie algebra, then there is no non-trivial Poisson structure on $\g$ (this means that there is no non-trivial commutative, associative, and $\ad(\g)$-invariant product on $\g$). Our idea in this framework is to provide correspondences between some algebraic structures and other geometric ones, which could be very useful for geometric or other algebraic questions. 

Let $(G,H,\sigma)$ be a symmetric pair, the tangent map of $\sigma$ at the identity element (also denoted by $\sigma$) induces a splitting $\g=\m\oplus\h$ of the Lie algebra $\g$ of $G$ with $\h:=\ker(\sigma-\id_\g)$ and $\m:=\ker(\sigma+\id_\g)$. Further, one can easily check that $\h$ is the Lie algebra of $H$ and the following inclusions hold
\begin{equation}\label{1}
\Ad(H)(\m)\subseteq \m,\et [\m,\m]\subseteq\h.
\end{equation}
The splitting $\g=\m\oplus\h$ we just defined above is called the canonical decomposition of $\g$ (with respect to $\sigma$). Moreover, since $G/H$ is a reductive homogeneous $G$-space, we denote by $\n^0$ its canonical affine connection, i.e., the unique torsion-free $G$-invariant affine connection for which the geodesics are determined by the exponential map of $G$ (cf. \cite[Theorem $10.1$]{Nom1}). A \textit{special} affine connection on $G/H$ is a torsion-free $G$-invariant affine connection which has the same curvature as the canonical one. As a direct consequence of Nomizu's theorem on invariant affine connections, we will later see that:
\begin{thm}\label{thm1}
Let $(G,H,\sigma)$ be a symmetric pair, $M$ the corresponding symmetric space and $\g=\m\oplus\h$ the canonical decomposition of the Lie algebra $\g$ of $G$. There exists a one-to-one correspondence between the set of special affine connections on $M$ and the set of special products on $\m$, i.e., commutative, associative, and $\Ad(H)$-invariant products on $\m$.
\end{thm}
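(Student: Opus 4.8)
The plan is to route everything through Nomizu's classification of invariant affine connections on a reductive homogeneous space. First I would identify $T_oM$ with $\m$ at the base point $o=eH$ and recall that Nomizu's theorem furnishes a bijection between $G$-invariant affine connections $\n$ on $M$ and $\Ad(H)$-invariant bilinear maps $\Lambda\colon\m\times\m\to\m$ (the associated Nomizu map), for which I write $X*Y:=\Lambda(X)Y$ for the induced product on $\m$. Under this dictionary the $\Ad(H)$-equivariance of $\Lambda$ is exactly the $\Ad(H)$-invariance demanded of a special product, so that condition matches automatically and requires no separate argument. The canonical connection $\n^0$ is the one with $\Lambda^0=0$, since for a symmetric pair the $\m$-component of $[X,Y]$ vanishes by the inclusion $[\m,\m]\subseteq\h$ from \eqref{1}.

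Next I would translate the two geometric requirements into algebraic ones using Nomizu's torsion and curvature formulas evaluated at $o$. The torsion reads $T(X,Y)=X*Y-Y*X-[X,Y]_\m$; because $[X,Y]_\m=0$ on a symmetric space, torsion-freeness is precisely the commutativity $X*Y=Y*X$. For the curvature the formula is $R(X,Y)Z=X*(Y*Z)-Y*(X*Z)-[X,Y]_\m*Z-[[X,Y]_\h,Z]$, and again the middle term drops out, leaving $R(X,Y)Z=X*(Y*Z)-Y*(X*Z)-[[X,Y],Z]$. Since $R^0(X,Y)Z=-[[X,Y],Z]$, subtracting shows that ``$\n$ has the same curvature as $\n^0$'' is equivalent to the single identity $X*(Y*Z)=Y*(X*Z)$, that is, to the commuting of the left-multiplication operators $[\Lambda(X),\Lambda(Y)]=0$. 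Note that whatever the sign convention on the $[[X,Y],Z]$ term, it is common to $R$ and $R^0$ and cancels in the difference, so this reduction is robust.

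The final—and genuinely algebraic—step, which I regard as the crux, is to show that in the presence of commutativity this operator-commutation condition is the same as associativity of $*$. The key observation is the chain
\[
(X*Y)*Z=Z*(X*Y)=X*(Z*Y)=X*(Y*Z),
\]
where the outer equalities use commutativity and the middle one uses $[\Lambda(X),\Lambda(Z)]=0$; conversely, associativity together with commutativity yields $X*(Y*Z)=(X*Y)*Z=(Y*X)*Z=Y*(X*Z)$. Hence a commutative product satisfies $X*(Y*Z)=Y*(X*Z)$ for all $X,Y,Z$ exactly when it is associative. Combining the three equivalences, the assignment $\n\mapsto *$ restricts to a bijection between special affine connections on $M$ and commutative, associative, $\Ad(H)$-invariant products on $\m$, which is the assertion.

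I do not anticipate a deep obstacle: once Nomizu's formulas are in hand the work is essentially bookkeeping, and the only real content is the elementary equivalence of the last paragraph. The points that deserve care are bookkeeping-level: I must keep the sign and base-point conventions in the torsion and curvature formulas consistent with those pinning down $\Lambda^0=0$ for $\n^0$, and—when $H$ is disconnected—insist on full $\Ad(H)$-invariance rather than mere $\ad(\h)$-invariance, so that the correspondence is well defined and inverse to Nomizu's construction in both directions.
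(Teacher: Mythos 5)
Your proposal is correct and follows essentially the same route as the paper: both pass through Nomizu's correspondence, use the torsion and curvature formulas at the origin (the paper's \eqref{3} and \eqref{4}), and reduce the curvature condition to the commuting of left multiplications, which together with commutativity is equivalent to associativity. Your explicit chain $(X*Y)*Z=Z*(X*Y)=X*(Z*Y)=X*(Y*Z)$ simply spells out the step the paper leaves implicit in the phrase ``using \eqref{4} and the commutativity of $\star$ we obtain that $\star$ is associative.''
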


Our next result is to give some conditions on the symmetric pair $(G,H,\sigma)$ so that the canonical affine connection is the only special affine connection on $G/H$. To be precise, we introduce the following definition. 

\begin{defi}
The symmetric pair $(G,H,\sigma)$ is called 
\begin{enumerate}
\item simple if the isotropy representation $\ad^\m:\h\rightarrow\End(\m)$ is irreducible.
\item semi-simple if the isotropy representation $\ad^\m:\h\rightarrow\End(\m)$ is completely reducible.
\item strongly semi-simple if there exists a family $(\m_i)_{i=1}^k$ of simple $\h$-submodules of $\m$ such that 
\begin{equation}\label{strongsemisimple}
\m=\m_1\oplus\cdots\oplus\m_k,\et [\m_i,\m_i]\neq\{0\},
\end{equation}
for all $i\in\{1,\ldots,k\}$. Such a family is called a strongly decomposition of $\m$.
\end{enumerate}
\end{defi}

Clearly, simple or strongly semi-simple symmetric pairs are semi-simple; moreover, when the isotropy representation is faithful, then according to \cite[pp. $56$]{Nom1} a simple symmetric pair is strongly semi-simple if and only if $\g$ is a semi-simple Lie algebra.\\

Our main result is the following.

\begin{thm}\label{SimpleSymPairs} Let $(G,H,\sigma)$ be a symmetric pair and $\g=\m\oplus\h$ the canonical decomposition of the Lie algebra $\g$ of $G$. If $(G,H,\sigma)$ is simple (with $\dim\m>1$) or strongly semi-simple, then the trivial product is the only special product on $\m$. 
\end{thm}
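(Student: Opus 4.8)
The plan is to work entirely on the algebra side. Writing $A=(\m,\cdot)$ for a special product, the $\Ad(H)$-invariance says exactly that every $\Ad(h)$ is an automorphism of $A$ and, infinitesimally, that every $\ad^\m(X)$ with $X\in\h$ is a derivation of $A$; since $[\m,\m]\subseteq\h$, this yields the key identity
\begin{equation}\label{eq:deriv}
[[u,v],a\cdot b]=[[u,v],a]\cdot b+a\cdot[[u,v],b],\qquad u,v,a,b\in\m,
\end{equation}
which ties the special product to the Lie–triple product of the symmetric pair. The first observation is that in both cases $\m$ carries no nonzero $\Ad(H)$-fixed vector: in the simple case because a nontrivial irreducible module of dimension $>1$ has $\m^{H}=\{0\}$, and in the strongly semi-simple case because $[\m_i,\m_i]\neq\{0\}$ forces $\dim\m_i\geq 2$ (a line always carries the trivial bracket), so every summand $\m_i$ is nontrivial and again $\m^{H}=\{0\}$.

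First I would reduce both cases to the nilpotency of $A$. The nilradical $\rad(A)$, the ideal of nilpotent elements, is preserved by every automorphism, hence is an $\Ad(H)$-submodule, and $A/\rad(A)$ is a semi-simple commutative $\rel$-algebra, i.e.\ a finite product of copies of $\rel$ and $\cplx$. In particular $A/\rad(A)$ has a unit $\bar e$, which is fixed by the induced $\Ad(H)$-action because an algebra automorphism preserves the unique unit. Thus $\bar e\in\bigl(A/\rad(A)\bigr)^{H}$; but $A/\rad(A)$ is a quotient $H$-module of $\m$, and since the isotropy representation is completely reducible with only nontrivial constituents, $\bigl(A/\rad(A)\bigr)^{H}=\{0\}$. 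Hence $\bar e=0$, so $A=\rad(A)$ is nilpotent.

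In the simple case this finishes the proof at once: the image $A^2=\m\cdot\m$ is an $\Ad(H)$-submodule of the irreducible module $\m$, so $A^2=\{0\}$ or $A^2=\m$; the second option gives $A=A^2=\cdots=A^N=\{0\}$ by nilpotency, contradicting $\dim\m>1$, hence $A^2=\{0\}$ and the product is trivial. (Equivalently, a nonzero idempotent $e$ would satisfy $\ad^\m(X)e\in\ker\bigl(L_e-\tfrac12\id\bigr)=\{0\}$ since $L_e$ is a projection, forcing $e\in\m^{H}=\{0\}$.)

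The strongly semi-simple case requires upgrading nilpotency of $A$ to triviality, and this is where I expect the main difficulty: for a general completely reducible module a nonzero nilpotent invariant product can exist, so the hypothesis $[\m_i,\m_i]\neq\{0\}$ must be used essentially through \eqref{eq:deriv}. My proposed route is to reduce to the simple case via the subpairs $\g_i:=[\m_i,\m_i]\oplus\m_i$, which one checks are symmetric subalgebras for which $(\g_i,\h_i=[\m_i,\m_i])$ is a simple pair with $\dim\m_i\geq 2$. Because $\m_i$ is $\h$-irreducible, $\ad^\m(\h)$ cannot act on it by nilpotent operators only — otherwise Engel's theorem would produce a fixed line and force $\dim\m_i=1$ — so some $D=\ad^\m(X_0)$ acts non-nilpotently on $\m_i$; its semisimple Jordan part $D_s$ is again a derivation of $A$, giving a nontrivial weight grading $\m=\bigoplus_\lambda\m[\lambda]$ with $\m[\lambda]\cdot\m[\mu]\subseteq\m[\lambda+\mu]$ and nonzero weights on $\m_i$. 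The crux I single out is to combine this grading, the annihilator $\{z:z\cdot\m=\{0\}\}\supseteq A^{N}$ of the nilpotent algebra, and \eqref{eq:deriv} to force $\m_i\cdot\m_j$ into the annihilator and thereby obtain $A^2=\{0\}$; once one knows $\m_i\cdot\m_i\subseteq\m_i$ with vanishing cross terms, the simple-case argument applied inside each $\g_i$ gives $\m_i\cdot\m_i=\{0\}$ and hence $A=\{0\}$. Making this weight/annihilator analysis airtight is the step I regard as the principal obstacle.
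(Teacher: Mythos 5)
Your reduction to nilpotency and your treatment of the simple case are correct, and they take a genuinely different route from the paper: where the paper proves that each multiplication operator $\alpha_u$ is nilpotent by a trace argument (using $\m=[\h,\m]$ and $\alpha_u^n=\alpha_{u^n}$) and then applies Engel's theorem to the commuting family $\{\alpha_u\}$, you invoke the structure theory of finite-dimensional commutative associative algebras: the unit of $A/\rad(A)$ would be an $\h$-fixed vector in a completely reducible module all of whose irreducible constituents are nontrivial, hence zero, so $A$ is nilpotent; in the simple case $A^2$ is then an $\h$-submodule of the irreducible module $\m$, and $A^2=\m$ contradicts nilpotency. For the simple case this is complete and arguably shorter than the paper's argument.

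The strongly semi-simple case, however, has a genuine gap, and it is exactly the step you flag as the principal obstacle. Nilpotency together with complete reducibility and nontriviality of the constituents does not by itself force $A=\{0\}$: any nonzero $\h$-equivariant symmetric map $S^2\m_1\to\m_2$ between nontrivial irreducibles, extended by zero, gives a nonzero nilpotent ($0$-associative) commutative invariant product. So the hypothesis $[\m_i,\m_i]\neq\{0\}$ must genuinely enter, and your weight-grading/annihilator sketch, which is meant to make it enter, is not carried out. The paper closes precisely this gap with an elementary lemma instead of Jordan decompositions: by the Jacobi identity $[\m_i,[\m_j,\m_j]]=\{0\}$ for $i\neq j$, and $Z_j[\m_j,\m_j]:=\{u\in\m_j\mid [u,w]=0\ \text{for all}\ w\in[\m_j,\m_j]\}$ is an $\h$-submodule of $\m_j$ which must vanish (otherwise $[\m_j,[\m_j,\m_j]]=\{0\}$, which forces $[\m_j,\m_j]=\{0\}$ after the reduction to a faithful isotropy representation). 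Then for $u_i,v_i\in\m_i$ and $w_j\in[\m_j,\m_j]\subseteq\h$ with $j\neq i$, invariance gives
\begin{equation*}
[w_j,\alpha(u_i,v_i)]=\alpha([w_j,u_i],v_i)+\alpha(u_i,[w_j,v_i])=0,
\end{equation*}
so the $\m_j$-component of $\alpha(u_i,v_i)$ lies in $Z_j[\m_j,\m_j]=\{0\}$, whence $\m_i\cdot\m_i\subseteq\m_i$; a similar computation, writing elements of $\m_i$ as $[v_i,w_i]$ with $w_i\in[\m_i,\m_i]$, kills the cross terms $\m_i\cdot\m_j$. Once the product respects the decomposition, each $(\m_i,\cdot)$ is a nilpotent subalgebra with $\m_i\cdot\m_i$ an $\h$-submodule of the irreducible $\m_i$, and your own simple-case argument finishes; no weight grading is needed. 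As written, your proposal proves the theorem only in the simple case.
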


Note that this result is not valid for semi-simple symmetric pairs as shown by Example \eqref{ExNonStrongSemiSimple}.\\

The paper is organized as follows. In Section \ref{Section2}, we prove Theorem \ref{thm1} and we show that special affine connections are semi-symmetric (see Proposition \ref{PropSemiSym}). Section \ref{Section3} is devoted to proving Theorem \ref{SimpleSymPairs} and providing some examples. In Section \ref{Section4}, we give other examples of strongly semi-simple symmetric pairs. In Section \ref{Section5} we introduce a particular subclass of commutative, associative algebra, what we called commutative, $0$-associative algebra, which allows us to give examples of symmetric spaces with special affine connections (see Proposition \ref{ExpSpAffCon}). Finally, in Section \ref{Section6} we compute the holonomy Lie algebra of a special affine connection.

Until the end of this paper, $(G,H,\sigma)$ will be a symmetric pair, $M:=G/H$ the corresponding symmetric space, $\g=\m\oplus\h$ the canonical decomposition of the Lie algebra $\g$ of $G$, and $\ad^\m:\h\rightarrow\End(\m)$ the isotropy representation of $\h$ in $\m$.

All vector spaces, algebras, etc. in this paper are finite dimensional and over the field of real numbers $\rel$.  

\vskip 0.5cm	

{\bf ACKNOWLEDGMENT} The authors would like to thank Sa\"id Benayadi, University Lorraine-France, for his helpful discussions and advice concerning this paper.

\section{Special Affine Connections on Symmetric Spaces}\label{Section2}

Before going further into the proof of Theorem \ref{thm1}, let us start with some facts that should be known. First of all, since $M$ is a reductive homogeneous $G$-space, then according to Nomizu's Theorem \cite[Theorem $8.1$]{Nom1} there is a one-to-one correspondence between the set of $G$-invariant affine connections on $M$ and the set of bilinear maps $\alpha:\m\times\m\rightarrow\m$ which are invariant by $\Ad(H)$, i.e.,
\begin{equation}\label{2}
\Ad_h\alpha(u,v)=\alpha\left(\Ad_hu,\Ad_hv\right),
\end{equation}
for $u,v\in\m$ and $h\in H$. If $\n$ is a $G$-invariant affine connection on $M$, then it is obvious that the torsion $T^\n$ and the curvature $R^\n$ tensor fields of $\n$ are also $G$-invariant. Thus, they are completely determined by their value at the origin $o\in M$. Hence, under the identification of $T_oM$ with $\m$, using the second inclusion of $\eqref{1}$ in \cite[Formulas $(9.1)$ and $(9.6)$]{Nom1}, the torsion $T^\n$ and the curvature $R^\n$ of $\n$ can be expressed as follows:
\begin{eqnarray}
T^\n(u,v)&=&\alpha^\n(u,v)-\alpha^\n(v,u);\label{3}\\
R^\n(u,v)w&=&\alpha^\n(u,\alpha^\n(v,w))-\alpha^\n(v,\alpha^\n(u,w))-[[u,v],w],\label{4}
\end{eqnarray}
for $u,v,w\in\m$, where $\alpha^\n:\m\times\m\rightarrow\m$ is the bilinear map associated to $\n$.
In particular, for the canonical affine connection $\n^0$, its associated product on $\m$ is the trivial product $\alpha^0=0$. Hence it is torsion-free and its curvature is given by
\begin{equation*}
R^0(u,v)w=-[[u,v],w]\qquad\forall\, u,v,w\in\m.
\end{equation*}

We can now give the following.
\begin{proof} [\textnormal{\textbf{Proof of Theorem \ref{thm1}}}]
Let $\n$ be a special affine connection on $M$. We define a product $\star:\m\times\m\rightarrow\m$ on $\m$ by
\begin{equation*}
u\star v:=\alpha^\n(u,v),
\end{equation*}
for $u,v\in\m$, where $\alpha^\n:\m\times\m\rightarrow\m$ is the bilinear map associated to $\n$. Clearly that the product $\star$ is $\Ad(H)$-invariant and since $\n$ is torsion-free it is commutative by \eqref{3}. Furthermore, using $\eqref{4}$ and the commutativity of $\star$ we obtain that $\star$ is associative.\\
Conversely, given a commutative, associative and $\Ad(H)$-invariant product $\star$ on $\m$, then define a bilinear map $\alpha:\m\times\m\rightarrow\m$ by
\begin{equation*}
\alpha(u,v):=u\star v.
\end{equation*}
Since $\alpha$ is $\Ad(H)$-invariant, it defines a $G$-invariant affine connection $\n^\alpha$ on $M$. Moreover, since $\star$ is commutative, we obtain by \eqref{3} that $\n^\alpha$ is torsion-free. Furthermore, using the fact that $\star$ is commutative, associative and $\eqref{4}$ we get that $\n^\alpha$ has the same curvature as $\n^0$.
\end{proof}

It is clear that any connected Lie group $G$ can be considered as a symmetric space, where the symmetric pair is $G_0:=G\times G$, $H_0:=\Delta G_0$, $\sigma_0:G_0\to G_0,\,(a,b)\mapsto(b,a)$, and the canonical decomposition of the Lie algebra $\g_0$ of  $G_0$ is $\g_0=\m_0\oplus\h_0$, where
\begin{equation*}
\h_0=\big\{(u,u)\,|\,\,u\in\g\big\},\et\m_0=\big\{(u,-u)\,|\,\,u\in\g\big\}.
\end{equation*}
Moreover, the isotropy representation of $H_0$ in $\m_0$ is equivalent to the adjoint representation of $G$ in $\g$, and the canonical connection $\n^0$ on $G$ is the torsion-free bi-invariant affine connection given by
\begin{equation*}
\n^0_{u^+}v^+=\dfrac{1}{2}[u^+,v^+],
\end{equation*}
where $u^+,v^+$ denote the left invariant vector fields on $G$ assosiated respectively to the vectors $u,v\in\g$. So, a special affine connection on $G$ is a torsion-free bi-invariant affine connection on $G$ which has the same curvature as $\n^0$. On the other hand, a special product on $\m_0$ is equivalent to a Poisson structure on $\g$, i.e., a commutative, associative, and $\ad(\g)$-invariant product on $\g$. Thus by Theorem \ref{thm1} we get the following result obtained in \cite[Theorem 2.1]{BenBo6}.
\begin{cor}\label{Thm1BenBo6}
Let $G$ be a connected Lie group and $\g$ its Lie algebra. There is a one-to-one correspondence between the set of special affine connections on $G$ and the set of Poisson structures on $\g$. 
\end{cor}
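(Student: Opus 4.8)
The plan is to apply Theorem \ref{thm1} directly to the distinguished symmetric pair $(G_0,H_0,\sigma_0)$ with $G_0=G\times G$, $H_0=\Delta G_0$ and $\sigma_0(a,b)=(b,a)$, for which the associated symmetric space $G_0/H_0$ is identified with $G$ itself (the $G_0$-invariant, i.e. bi-invariant, torsion-free connections with the curvature of $\n^0$ being exactly the special connections on $G$). Theorem \ref{thm1} then supplies a bijection between special affine connections on $G_0/H_0=G$ and special products on $\m_0$, that is, commutative, associative, $\Ad(H_0)$-invariant products on $\m_0$. It therefore only remains to identify these special products on $\m_0$ with Poisson structures on $\g$.

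To this end I would introduce the linear isomorphism $\phi:\g\to\m_0$, $u\mapsto(u,-u)$, and transport a given special product $\star$ on $\m_0$ to a product $\cdot$ on $\g$ by $u\cdot v:=\phi^{-1}\big(\phi(u)\star\phi(v)\big)$. Since commutativity and associativity are purely algebraic identities preserved under any isomorphism of the underlying vector space, $\cdot$ is commutative and associative precisely when $\star$ is. The only point genuinely requiring verification is that $\Ad(H_0)$-invariance of $\star$ corresponds to $\ad(\g)$-invariance of $\cdot$.

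For the invariance I would use the explicit description of $H_0$ and $\m_0$: for $(a,a)\in H_0$ one has $\Ad_{(a,a)}(u,-u)=(\Ad_a u,-\Ad_a u)$, so under $\phi$ the isotropy action of $H_0$ on $\m_0$ is carried to the adjoint action $\Ad_a$ of $G$ on $\g$ (this is the equivalence of representations already noted). Hence $\star$ is $\Ad(H_0)$-invariant if and only if $\cdot$ is $\Ad(G)$-invariant. Because $G$ is connected, $\Ad(G)$-invariance of $\cdot$ is equivalent to its $\ad(\g)$-invariance: differentiating a curve $a(t)$ with $a(0)=e$ gives the infinitesimal statement, and connectedness lets one integrate it back. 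Thus $\phi$ sets up a bijection between special products on $\m_0$ and commutative, associative, $\ad(\g)$-invariant products on $\g$, that is, Poisson structures on $\g$.

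Composing this bijection with the correspondence of Theorem \ref{thm1} yields the desired one-to-one correspondence. The argument is essentially bookkeeping, with the only mildly delicate point being the passage from $\Ad(G)$-invariance to $\ad(\g)$-invariance, where the connectedness hypothesis on $G$ is genuinely used.
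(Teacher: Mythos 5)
Your proposal is correct and follows exactly the route the paper takes: apply Theorem \ref{thm1} to the symmetric pair $(G\times G,\Delta G,\sigma_0)$ and identify special products on $\m_0$ with Poisson structures on $\g$ via the equivalence of the isotropy and adjoint representations. You merely spell out in more detail the transport of structure along $u\mapsto(u,-u)$ and the passage from $\Ad(G)$- to $\ad(\g)$-invariance using connectedness, which the paper leaves implicit.
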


In differential geometry there is a notion of semi-symmetric spaces which is a direct generalization of locally symmetric spaces, namely, smooth manifolds endowed with a torsion-free affine connection $\n$ for which the curvature tensor $R^\n$ satisfies
\begin{equation*}
\n_X\n_YR^\n-\n_Y\n_XR^\n-\n_{[X,Y]}R^\n=0,
\end{equation*}
for any vector fields $X,Y$. It is known that the above equation is equivalent (see \cite[Chap 4. Formula (26)]{Post}) to the following one
\begin{equation*}
\left[R^\n(X,Y),R^\n(Z,W)\right]=R^\n\left(R^\n(X,Y)Z,W\right)+R^\n\left(Z,R^\n(X,Y)W\right),
\end{equation*}
for any vector fields $X,Y,Z,W$. Hence, since the curvature tensor $R^0$ of the canonical affine connection $\n^0$ satisfies this condition, we easily obtain the following proposition.

\begin{prop}\label{PropSemiSym}
The smooth manifold $M$ endowed with a special affine connection is semi-symmetric. 
\end{prop}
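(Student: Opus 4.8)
The plan is to reduce the semi-symmetry of a special affine connection to that of the canonical one by exploiting the algebraic reformulation of the semi-symmetry condition quoted in the excerpt. The crucial observation is that the identity
\[
\left[R^\n(X,Y),R^\n(Z,W)\right]=R^\n\left(R^\n(X,Y)Z,W\right)+R^\n\left(Z,R^\n(X,Y)W\right)
\]
is purely pointwise and algebraic: it involves only the curvature $(1,3)$-tensor $R^\n$ evaluated at a point, and neither covariant derivatives of $R^\n$ nor the connection $\n$ itself appear. Hence whether it holds depends solely on the curvature tensor viewed as an algebraic object, and any two torsion-free connections sharing the same curvature tensor are simultaneously semi-symmetric (or not).

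First I would recall that the canonical affine connection $\n^0$ makes $M$ into an affine symmetric space; in particular $M$ is locally symmetric, so its curvature is parallel, $\n^0 R^0=0$. This makes the differential form of the semi-symmetry condition hold trivially for $\n^0$, and therefore the equivalent algebraic identity displayed above holds with $R^\n$ replaced by $R^0$.

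Next, let $\n$ be any special affine connection on $M$. By definition $\n$ is torsion-free and has the same curvature as the canonical connection, that is $R^\n=R^0$. Substituting this equality into the algebraic identity and invoking the previous step shows that the identity holds verbatim for $R^\n$. Since $\n$ is torsion-free, this is precisely the statement that $M$ endowed with $\n$ is semi-symmetric.

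I do not expect a genuine obstacle in this argument; the only step requiring care is the observation of the first paragraph, namely that the algebraic reformulation of semi-symmetry is insensitive to the choice of connection and detects only the curvature tensor. Once this is isolated, the proposition is immediate from $R^\n=R^0$ together with the local symmetry of the canonical connection.
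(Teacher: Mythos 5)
Your argument is correct and is essentially the paper's own proof: the authors likewise invoke the algebraic reformulation of semi-symmetry (which depends only on the curvature tensor), note that $R^0$ satisfies it because $\n^0 R^0=0$, and conclude from $R^\n=R^0$. No meaningful difference in approach.
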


\section{Simple and Strongly Semi-simple Symmetric Pairs}\label{Section3}

In this section we will give a proof for Theorem \ref{SimpleSymPairs}. We begin by the case for which $(G,H,\sigma)$ is a simple symmetric pair (with $\dim\m>1$), then we pass to the strongly semi-simple case.
First notice that, if $\ad^\m:\h\rightarrow\End(\m)$ is not trivial (which is the case if $(G,H,\sigma)$ is simple with $\dim\m>1$ or strongly semi-simple), then $\mathfrak{i}:=\ker\ad^\m$ is an ideal of $\g$ which is strictly contained in $\h$. Thus we get a faithful representation $\overline{\ad}^\m:\overline{\h}\rightarrow\End(\m)$ of the Lie algebra $\overline{\h}:=\h/\mathfrak{i}$. Further, a product on $\m$ is $\ad(\h)$-invariant if and only if it is $\overline{\ad}^\m\left(\overline{\h}\right)$-invariant. Hence, throughout this section we may assume without loss of generality that the isotropy representation of $\h$ in $\m$ is faithful, i.e., $\ad^\m:\h\rightarrow\End(\m)$ is injective.


\comment{
\begin{ex}\textnormal{
If $\h$ is a semi-simple Lie algebra, then by Weyl's theorem the symmetric pair $(G,H,\sigma)$ is semi-simple.}
\end{ex}
}

\begin{proof}[\textnormal{\textbf{Proof of Theorem \ref{SimpleSymPairs} in the case where $(G,H,\sigma)$ is simple}}]
We start with the following remark: since $\m$ is a simple $\h$-module and $[\h,\m]\subseteq\m$ is an $\h$-submodule of $\m$, then it will be either $\{0\}$ or $\m$. But since $\ad^\m$ is faithful it follows that $\m=[\h,\m]$. Now let $\alpha:\m\times\m\rightarrow\m$ be a special product on $\m$. Define
\begin{equation*}
\mathcal{I}:=\big\{u\in\m\,|\,\alpha_u=0\big\}.
\end{equation*}
For $a\in\h$ and $u\in\mathcal{I}$, using \eqref{2} we have 
\begin{equation*}
\alpha_{[a,u]}=[\ad_{a},\alpha_u]=0.
\end{equation*}
Thus $\mathcal{I}$ is an $\h$-submodule of $\m$ and therefore either $\mathcal{I}=\{0\}$ or $\mathcal{I}=\m$. Suppose by contradiction that $\mathcal{I}=\{0\}$. The product on $\m$ given by $u\star v:=\alpha(u,v)$ for $u,v\in\m$ is a special product and hence it is commutative and associative. So for any $u,v\in\m$ and $n\geq 1$
\begin{eqnarray}
\alpha_u^n(v)&=&\alpha_u\ro\alpha_u\ro\cdots\ro\alpha_u(v)\nonumber\\
&=&u\star u\star\cdots\star u\star v\nonumber\\
&=&\alpha_{u^n}(v).\label{alpuNilp}
\end{eqnarray}
On the other hand for $a\in\h$ and $v\in\m$, we have 
\begin{equation*}
\tr\left(\alpha_{[a,v]}\right)=\tr\left(\left[\ad_{a},\alpha_{v}\right]\right)=0.
\end{equation*}
Since each element of $\m$ is a linear combination of elements of $[\h,\m]$, then by $\eqref{alpuNilp}$ we get 
\begin{equation*}
\tr(\alpha^n_u)=0\qquad  \forall\, u\in\m,\, n\geq 1.
\end{equation*}
Hence $\alpha_u$ is a nilpotent endomorphism of $\m$. Let $\widetilde{\m}$ be the vector space
\begin{equation*}
\widetilde{\m}:=\Big\{\alpha_u\in\End(\m)\,|\,\,u\in\m\Big\}.
\end{equation*}
Clearly that $\widetilde{\m}$ is a Lie subalgebra of $\End(\m)$ because $[\alpha_u,\alpha_v]=0$ for $u,v\in\m$. Moreover, each element of $\widetilde{\m}$ is a nilpotent endomorphism of $\m$. Thus by Engel's Theorem there exists a nonzero element $u_0\in\m$
such that
\begin{equation*}
\alpha_{u_0}(u)=\alpha_u(u_0)=0, \quad\forall\, u\in\m.
\end{equation*}
So $\alpha_{u_0}=0$ which implies that $u_0\in\mathcal{I}$. But this constitutes a contradiction and therefore proves the claim.
\end{proof}

\begin{cor}
If $(G,H,\sigma)$ is simple (with $\dim M>1$), then the canonical affine connection $\n^0$ is the only special affine connection on $M$.
\end{cor}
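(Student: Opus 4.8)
The plan is to deduce this statement directly from the two results already in hand: the bijection of Theorem \ref{thm1} together with the simple case of Theorem \ref{SimpleSymPairs} established just above. No genuinely new argument is needed; the content lies entirely in matching up the objects on the two sides of the correspondence.

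First I would recall that, under the canonical identification $T_oM\cong\m$, one has $\dim M=\dim\m$, so the hypothesis $\dim M>1$ is precisely $\dim\m>1$, and simplicity of $(G,H,\sigma)$ means the isotropy representation $\ad^\m:\h\rightarrow\End(\m)$ is irreducible. By Theorem \ref{thm1} there is a one-to-one correspondence between the set of special affine connections on $M$ and the set of special products on $\m$. Under this correspondence the canonical affine connection $\n^0$ corresponds to its associated bilinear map $\alpha^0=0$, i.e. to the trivial product on $\m$, as noted after the curvature formula \eqref{4}.

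The key input is then the simple case of Theorem \ref{SimpleSymPairs} proved above, which asserts that when $(G,H,\sigma)$ is simple with $\dim\m>1$ the trivial product is the only special product on $\m$. Pushing this through the bijection of Theorem \ref{thm1}, I conclude that the only special affine connection on $M$ is the one associated to the trivial product, namely $\n^0$. The single point requiring care — and it is already settled by the discussion preceding the proof of Theorem \ref{thm1} — is that the trivial product corresponds to $\n^0$ itself, not merely to some special connection; once this identification is in place, uniqueness is immediate. Hence I do not expect any real obstacle here, only the bookkeeping of transporting the uniqueness statement across the correspondence.
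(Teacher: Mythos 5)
Your proposal is correct and is exactly the deduction the paper intends: the corollary follows immediately by combining the bijection of Theorem \ref{thm1} (under which $\n^0$ corresponds to the trivial product $\alpha^0=0$) with the simple case of Theorem \ref{SimpleSymPairs}, using $\dim M=\dim\m$. The paper leaves this routine transport across the correspondence implicit, so there is nothing to add.
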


\begin{ex}\textnormal{
If $\g$ is a simple Lie algebra and $H$ is compact, then the symmetric pair $(G,H,\sigma)$ is simple (cf. \cite[Chap 11. Proposition 7.4]{Kob1}). Moreover, since $\g$ is simple we have that $\dim\m>1$ (see \cite[pp. 56]{Nom1}), and therefore the canonical affine connection $\n^0$ is the only special affine connection on $M$.
}\end{ex}

\begin{ex}\textnormal{It is clear that the symmetric pair $(\SO(n+1),\SO(n),\sigma_{J_n})$ is simple ($n>1$), where $\sigma_{J_n}(A):=J_nAJ_n$, with $J_n:=\begin{psmallmatrix}I_n&0\\0&-1 \end{psmallmatrix}$. Thus, the canonical affine connection $\n^0$ is the only special affine connection on the unit $n$-sphere $\mathbb{S}^n$.
}\end{ex}

To demonstrate Theorem \ref{SimpleSymPairs} in the case where $(G,H,\sigma)$ is strongly semi-simple we need the following lemma.

\begin{lem}
If $(G,H,\sigma)$ is strongly semi-simple and $(\m_i)_{i=1}^k$ a strongly decomposition of $\m$, then for each $i,j,l\in\{1,\ldots,k\}$ such that $i\neq j$ and $i\neq l$, we have
\begin{enumerate}
\item[$1.$] $[\m_i,[\m_j,\m_l]]=\{0\}$.
\item[$2.$] $[\m_i,[\m_i,\m_i]]=\m_i$.
\item[$3.$] $Z_i[\m_i,\m_i]:=\Big\{u_i\in\m_i\,|\,\,[u_i,w_i]=0,\,\,\forall\, w_i\in [\m_i,\m_i]\Big\}=\{0\}$.
\end{enumerate}
\end{lem}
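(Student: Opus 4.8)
The plan is to prove the three assertions in the stated order, since claim~2 will use claim~1 and claim~3 will use claim~2. The only tools needed are the structural relations $[\m,\m]\subseteq\h$ and $[\h,\m]\subseteq\m$ of \eqref{1}, the fact that each $\m_i$ is an $\h$-submodule (so $[\h,\m_i]\subseteq\m_i$), the directness of $\m=\m_1\oplus\cdots\oplus\m_k$, the simplicity of each $\m_i$ as an $\h$-module, the condition $[\m_i,\m_i]\neq\{0\}$ from \eqref{strongsemisimple}, and the standing assumption that $\ad^\m$ is faithful. For claim~1 I would simply expand via Jacobi: for $u_i\in\m_i$, $u_j\in\m_j$, $u_l\in\m_l$ with $i\neq j$ and $i\neq l$, write $[u_i,[u_j,u_l]]=[[u_i,u_j],u_l]+[u_j,[u_i,u_l]]$. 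Since $[\m_i,\m_j]\subseteq\h$ and $[\m_i,\m_l]\subseteq\h$, while $[\h,\m_l]\subseteq\m_l$ and $[\h,\m_j]\subseteq\m_j$, the right-hand side lies in $\m_j+\m_l$. On the other hand $[u_j,u_l]\in[\m,\m]\subseteq\h$ forces the left-hand side into $[\m_i,\h]\subseteq\m_i$. As $i\neq j$ and $i\neq l$, directness gives $\m_i\cap(\m_j+\m_l)=\{0\}$, so the bracket vanishes.

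For claim~2, note first that $[\m_i,[\m_i,\m_i]]\subseteq[\m_i,\h]\subseteq\m_i$, so it suffices to show that $V_i:=[\m_i,[\m_i,\m_i]]$ is a \emph{nonzero} $\h$-submodule of $\m_i$; simplicity of $\m_i$ then yields $V_i=\m_i$. The $\h$-invariance of $V_i$ follows from two applications of the Jacobi identity, distributing $\ad_a$ for $a\in\h$ across both nested brackets and repeatedly using $[\h,\m_i]\subseteq\m_i$. To rule out $V_i=\{0\}$ I argue by contradiction: if $V_i=\{0\}$ then $[\m_i,\m_i]$ annihilates $\m_i$, while claim~1 (with the two inner slots equal to $i$ and the outer slot $j\neq i$) shows $[\m_i,\m_i]$ annihilates each $\m_j$ with $j\neq i$; hence $[\m_i,\m_i]\subseteq\h$ acts trivially on all of $\m$, and faithfulness of $\ad^\m$ forces $[\m_i,\m_i]=\{0\}$, contradicting \eqref{strongsemisimple}.

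For claim~3 I would run the same Schur-type dichotomy. I show that $Z_i[\m_i,\m_i]$ is an $\h$-submodule of $\m_i$, hence equal to $\{0\}$ or $\m_i$, and then exclude $\m_i$. For invariance, take $a\in\h$ and $u_i\in Z_i[\m_i,\m_i]$; using $[\h,[\m_i,\m_i]]\subseteq[\m_i,\m_i]$ (itself a one-line Jacobi check) together with Jacobi gives $[[a,u_i],w_i]=[a,[u_i,w_i]]-[u_i,[a,w_i]]=0$ for every $w_i\in[\m_i,\m_i]$, so $[a,u_i]\in Z_i[\m_i,\m_i]$. Finally, $Z_i[\m_i,\m_i]=\m_i$ would say exactly that $[\m_i,[\m_i,\m_i]]=\{0\}$, which contradicts claim~2 since $\m_i\neq\{0\}$. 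Therefore $Z_i[\m_i,\m_i]=\{0\}$.

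The Jacobi expansions and the $\h$-invariance verifications are routine; the genuine content lies in eliminating the degenerate alternative in each simplicity argument. This is the only place where the hypotheses beyond the Jacobi identity are used, and I expect the implication ``$V_i=\{0\}\Rightarrow[\m_i,\m_i]$ acts trivially on all of $\m$'' in claim~2 — which couples claim~1 with faithfulness of the isotropy representation and with $[\m_i,\m_i]\neq\{0\}$ — to be the main obstacle and the hinge on which the whole lemma turns.
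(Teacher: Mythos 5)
Your proposal is correct and follows essentially the same route as the paper's proof: the Jacobi-identity containment argument for claim~1, the Schur-type dichotomy on the $\h$-submodules $[\m_i,[\m_i,\m_i]]$ and $Z_i[\m_i,\m_i]$ for claims~2 and~3, and the elimination of the degenerate case in claim~2 via claim~1 together with faithfulness of $\ad^\m$ and $[\m_i,\m_i]\neq\{0\}$. You are in fact slightly more explicit than the paper in spelling out where faithfulness enters.
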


\begin{proof}
First we have by Jacobi identity $[\m_i,[\m_j,\m_l]]\subseteq[[\m_i,\m_j],\m_l]+[\m_j,[\m_i,\m_l]]$. Thus
\begin{equation*}
[\m_i,[\m_j,\m_l]]\subseteq \m_i\cap(\m_l\oplus\m_j)=\{0\}.
\end{equation*}
For the second one, using again Jacobi identity we get
\begin{eqnarray*}
[\h,[\m_i,[\m_i,\m_i]]]&\subseteq&[[\h,\m_i],[\m_i,\m_i]]+[\m_i,[\h,[\m_i,\m_i]]]\\
&\subseteq&[\m_i,[\m_i,\m_i]]+[\m_i,[[\h,\m_i],\m_i]]+[\m_i,[\m_i,[\h,\m_i]]]\\
&\subseteq&[\m_i,[\m_i,\m_i]].
\end{eqnarray*}
So $[\m_i,[\m_i,\m_i]]$ is an $\h$-submodule of $\m_i$, and therefore either $[\m_i,[\m_i,\m_i]]=\{0\}$ or $[\m_i,[\m_i,\m_i]]=\m_i$. Suppose by contradiction that $[\m_i,[\m_i,\m_i]]=\{0\}$. Then by the first assertion we obtain that $[[\m_i,\m_i],\m]=\{0\}$, and it follows that $[\m_i,\m_i]=\{0\}$. But this contradicts $\eqref{strongsemisimple}$ and hence $[\m_i,[\m_i,\m_i]]=\m_i$. For the last one a similar argument shows that $Z_i[\m_i,\m_i]$ is an $\h$-submodule of $\m_i$ and therefore $Z_i[\m_i,\m_i]=\{0\}$, because otherwise we would have $[\m_i,[\m_i,\m_i]]=\{0\}$, which contradicts the second assertion.
\end{proof}

\begin{proof}[\textnormal{\textbf{Proof of Theorem \ref{SimpleSymPairs} in the strongly semi-simple case}}] The proof is very similar to the proof of the case where $(G,H,\sigma)$ is simple. Let $(\m_i)_{i=1}^k$ be a strongly decomposition of $\m$ and $\alpha:\m\times\m\rightarrow\m$ a special product on $\m$. Define
\begin{equation*}
\mathcal{I}:=\big\{u\in\m\,|\,\alpha_u=0\big\}.
\end{equation*}
Clearly that $\mathcal{I}$ is an $\h$-submodule of $\m$, and so our task is to proving that $\mathcal{I}=\m$. Before doing this, we will show that the product $\alpha$ respects the strongly decomposition of $\m$ i.e., for $i,j\in\{1,\ldots,k\}$ such that $i\neq j$ we have
\begin{equation*}
\alpha(\m_i,\m_i)\subseteq\m_i,\et \alpha(\m_i,\m_j)=\{0\}.
\end{equation*}
For $u_i,v_i\in\m_i$, $\alpha(u_i,v_i)$ can be written uniquely in the form
\begin{equation*}
\alpha(u_i,v_i)=\alpha(u_i,v_i)_1+\cdots+\alpha(u_i,v_i)_k.
\end{equation*}
So for each $j\neq i$ and $w_j\in[\m_j,\m_j]\subset \h$, using the $\Ad(H)$-invariance of $\alpha$ and the first assertion in the previous lemma, we get
\begin{eqnarray*}
[\alpha(u_i,v_i)_j,\w_j]&=&[\alpha(u_i,v_i),\w_j]\\
&=&\alpha([u_i,w_j],v_i)+\alpha(u_i,[v_i,w_j])\\
&=&0.
\end{eqnarray*}
Thus $\alpha(u_i,v_i)_j\in Z_j[\m_j,\m_j]=\{0\}$ and therefore $\alpha(\m_i,\m_i)\subseteq\m_i$. A similar argument shows that $\alpha(u_i,u_j)\in\m_i\oplus\m_j$ for $u_i\in\m_i$ and $u_j\in\m_j$. Moreover, if we write $u_l=[v_l,w_l]$, for $v_l\in\m_l,w_l\in[\m_l,\m_l]$ and $l=i,j$, we obtain 
\begin{eqnarray*}
\alpha(u_i,u_j)&=&[\alpha(v_i,u_j),w_i]-\alpha(v_i,[u_j,w_i])\\
&=&[\alpha(v_i,u_j),w_i]\\
&=&[\alpha(v_i,u_j)_i,w_i]\in\m_i\,,
\end{eqnarray*}
and similarly
\begin{eqnarray*}
\alpha(u_i,u_j)&=&[\alpha(u_i,v_j),w_j]-\alpha([u_i,w_j],v_j)\\
&=&[\alpha(u_i,v_j),w_j]\\
&=&[\alpha(u_i,v_j)_j,w_j]\in\m_j.
\end{eqnarray*}
Hence $\alpha(u_i,u_j)\in\m_i\cap\m_j=\{0\}$, which implies that $\alpha(\m_i,\m_j)=\{0\}$. Now suppose by contradiction that $\mathcal{I}\neq \m$. Since $\mathcal{I}$ is an $\h$-submodule of $\m$, then by changing the indexation of the sequence $(\m_i)_{i=1}^k$ we can assume that there exists $1\leq r\leq k$ such that for $i\in\{1,\ldots,r-1\}$ and $j\in\{r,\ldots,k\}$ we have
\begin{equation*}
\mathcal{I}\cap\m_i=\m_i,\et \mathcal{I}\cap\m_j=\{0\}.
\end{equation*}
Thus $\m=\mathcal{I}+\mathcal{J}$ with $\mathcal{J}:=\m_r\oplus\cdots\oplus\m_k$. Indeed this is a direct sum, to see it let $u\in \mathcal{I}\cap \mathcal{J}$, then write $u=u_r+\cdots+u_k$ for $u_j\in\m_j$ and $j\in\{r,\ldots,k\}$. For $w_j\in[\m_j,\m_j]$ we have
\begin{equation*}
[u_j,w_j]=[u,w_j]\in \mathcal{I}\cap\m_j.
\end{equation*}
Hence $u_j=0$ and it follows that $u=0$. This implies in turn that $\m=\mathcal{I}\oplus\mathcal{J}$. On the other hand, we denote by $\widetilde{\alpha}$ the restriction of $\alpha$ to $\mathcal{J}$, then the product on $\mathcal{J}$ given by $u\star v:=\widetilde{\alpha}(u,v)$ for $u,v\in\mathcal{J}$ is a special product and hence it is commutative and associative. So for any $u,v\in\mathcal{J}$ and $n\geq 1$ 
\begin{eqnarray}
\widetilde{\alpha}_u^n(v)&=&\widetilde{\alpha}_u\ro\widetilde{\alpha}_u\ro\cdots\ro\widetilde{\alpha}_u(v)\nonumber\\
&=&u\star u\star\cdots\star u\star v\nonumber\\
&=&\widetilde{\alpha}_{u^n}(v).\nonumber
\end{eqnarray}
Furthermore, every element $u$ of $\m$ can be expressed as a linear combination of elements of the form $[v_i,w_i]$ for $v_i\in\m_i,w_i\in[\m_i,\m_i]$ and $i\in\{1,\ldots,k\}$. Then using this and the $\Ad(H)$-invariance of $\alpha$ we can easily deduce that $\tr(\alpha_u)=0$. Thus for $u\in\mathcal{J},n\geq 1$ one has $\tr(\widetilde{\alpha}_u^n)=0$, and therefore $\widetilde{\alpha}_u$ is a nilpotent endomorphism of $\mathcal{J}$. Let $\widetilde{\mathcal{J}}$ be the vector space
\begin{equation*}
\widetilde{\mathcal{J}}:=\Big\{\widetilde{\alpha}_u\in\End(\mathcal{J})\,|\,\,u\in\mathcal{J}\Big\}.
\end{equation*}
It is obvious that $\widetilde{\mathcal{J}}$ is a Lie subalgebra of $\End(\mathcal{J})$. Furthermore, each element of $\widetilde{\mathcal{J}}$ is a nilpotent endomorphism of $\mathcal{J}$. Thus by Engel's Theorem there exists $u_0\in\mathcal{J}\backslash\{0\}$
such that
\begin{equation*}
\widetilde{\alpha}_{u_0}(u)=\widetilde{\alpha}_u(u_0)=0, \quad\forall u\in\mathcal{J}.
\end{equation*}
Hence $\widetilde{\alpha}_{u_0}=0$. But since the restriction of $\alpha_{u_0}$ to $\mathcal{I}$ vanishes, we deduce that $\alpha_{u_0}=0$ and then $u_0\in\mathcal{I}$. This constitues a contradiction and proves the claim.
\end{proof}

\begin{cor}
If $(G,H,\sigma)$ is strongly semi-simple, then the canonical affine connection $\n^0$ is the only special affine connection on $M$.
\end{cor}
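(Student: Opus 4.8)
The plan is to read this corollary off directly from Theorem \ref{SimpleSymPairs} by passing through the dictionary furnished by Theorem \ref{thm1}; essentially all of the genuine work has already been done, so no new computation should be required. The strategy is simply to transport the algebraic uniqueness statement (only the trivial product is special) across the established correspondence into the geometric uniqueness statement (only $\n^0$ is special).

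First I would recall the correspondence of Theorem \ref{thm1}: special affine connections $\n$ on $M$ are in one-to-one correspondence with special products on $\m$, via the bilinear map $\alpha^\n$ associated to $\n$ by Nomizu's theorem. I would then note, exactly as was observed in Section \ref{Section2}, that under this bijection the canonical connection $\n^0$ is precisely the one whose associated product is the trivial product $\alpha^0=0$.

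Next I would apply Theorem \ref{SimpleSymPairs}: because $(G,H,\sigma)$ is strongly semi-simple, the trivial product is the only special product on $\m$. Since the correspondence of Theorem \ref{thm1} is a genuine bijection, uniqueness on the algebraic side transfers verbatim to the geometric side, so the connection corresponding to $\alpha^0=0$, namely $\n^0$, is the unique special affine connection on $M$. The hard part, the Engel-type argument forcing every special product to vanish, is entirely contained in the proof of Theorem \ref{SimpleSymPairs}; here I expect no further obstacle beyond invoking the bijectivity of the correspondence and the identification of $\n^0$ with the trivial product.
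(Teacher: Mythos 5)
Your argument is correct and is exactly the paper's (implicit) route: the corollary follows immediately by combining the bijection of Theorem \ref{thm1} (under which $\n^0$ corresponds to the trivial product $\alpha^0=0$) with the strongly semi-simple case of Theorem \ref{SimpleSymPairs}. No further work is needed.
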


\begin{ex}\textnormal{Let $G$ be a connected semi-simple Lie group, $\g$ its Lie algebra and $(G_0,H_0,\sigma_0)$ its associated symmetric pair. Since the isotropy representation of $\h_0\cong\g$ in $\m_0\cong\g$ is equivalent to the adjoint representation of $\g$ and $\g$ is semi-simple, there exists a family $(\g_i)_{i=1}^k$ of simple ideals of $\g$ such that
\begin{equation*}
\g=\g_1\oplus\cdots\oplus\g_k,\et [\g_i,\g_i]=\g_i,\qquad\forall\,i\in\{1,\ldots,k\}.
\end{equation*}
Hence the symmetric pair $(G_0,H_0,\sigma_0)$ is strongly semi-simple. 
}\end{ex}

Consequently, we obtain the following corollary.
\begin{cor}
On a semi-simple connected Lie group $G$, the canonical affine connection $\n^0$ is the only special affine connection. 
\end{cor}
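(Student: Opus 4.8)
The plan is to deduce this corollary from the immediately preceding one on strongly semi-simple symmetric pairs, so that no fresh computation is required. First I would recall the identification, set up earlier in the paper, of a connected Lie group $G$ with the symmetric space attached to the symmetric pair $(G_0,H_0,\sigma_0)=(G\times G,\Delta G_0,\sigma_0)$, $\sigma_0(a,b)=(b,a)$. Under this identification the canonical decomposition is $\g_0=\m_0\oplus\h_0$ with $\h_0=\{(u,u)\mid u\in\g\}$ and $\m_0=\{(u,-u)\mid u\in\g\}$, the isotropy representation of $\h_0$ on $\m_0$ is equivalent to the adjoint representation $\ad\colon\g\to\End(\g)$, and the canonical connection on $G_0/H_0$ is the bi-invariant connection $\n^0_{u^+}v^+=\tfrac12[u^+,v^+]$ on $G$.

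Next I would invoke the preceding Example to see that $(G_0,H_0,\sigma_0)$ is strongly semi-simple. Because $G$ is semi-simple, its Lie algebra splits as a direct sum $\g=\g_1\oplus\cdots\oplus\g_k$ of simple ideals with $[\g_i,\g_i]=\g_i\neq\{0\}$. Transporting this splitting through the equivalence $\m_0\cong\g$ produces a family $(\m_{0,i})_{i=1}^k$ of simple $\h_0$-submodules of $\m_0$ with $\m_0=\m_{0,1}\oplus\cdots\oplus\m_{0,k}$ and $[\m_{0,i},\m_{0,i}]\neq\{0\}$, which is precisely a strongly decomposition in the sense of \eqref{strongsemisimple}.

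Finally I would apply the corollary for strongly semi-simple pairs to $(G_0,H_0,\sigma_0)$: it yields that on $M=G_0/H_0$ the canonical affine connection $\n^0$ is the only special affine connection. Reading this back through the identification $M\cong G$ gives the stated conclusion.

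I do not expect a real obstacle: the whole content lies in the strong semi-simplicity of $(G_0,H_0,\sigma_0)$, already recorded in the Example, together with the general corollary. The only point meriting a line of care is to verify that the adjoint equivalence $\m_0\cong\g$ carries the ideal decomposition of $\g$ to a decomposition of $\m_0$ into \emph{simple} $\h_0$-submodules, i.e. that $\ad^{\m_0}(\h_0)$-invariant subspaces of $\m_0$ correspond to $\ad(\g)$-invariant subspaces of $\g$; once this correspondence is noted, the simplicity of the $\g_i$ as $\g$-modules transfers verbatim and the brackets $[\m_{0,i},\m_{0,i}]$ are nonzero by $[\g_i,\g_i]=\g_i\neq\{0\}$.
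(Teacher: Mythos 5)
Your proposal is correct and matches the paper's own route exactly: the paper derives this corollary ("Consequently, ...") from the preceding Example showing that $(G_0,H_0,\sigma_0)$ is strongly semi-simple, combined with the corollary for strongly semi-simple symmetric pairs. Your extra remark about transporting the ideal decomposition of $\g$ through the equivalence $\m_0\cong\g$ is a reasonable point of care, but it is already subsumed in the Example's statement that the isotropy representation of $\h_0$ in $\m_0$ is equivalent to the adjoint representation of $\g$.
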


Although the conclusion of this corollary fails if we replace the semi-simplicity by the reductivity as the next proposition shows.

\begin{prop}
Every reductive non semi-simple connected Lie group $G$ admits a special affine connection which is different from the canonical one.
\end{prop}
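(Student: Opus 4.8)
The plan is to reduce the statement to a purely algebraic one via Corollary \ref{Thm1BenBo6}: special affine connections on $G$ correspond bijectively to Poisson structures on $\g$, that is, commutative, associative, $\ad(\g)$-invariant products, with the canonical connection $\n^0$ matched to the trivial product. Hence it suffices to construct one non-zero Poisson structure on $\g$.

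First I would use reductivity to write $\g=\mathfrak{z}(\g)\oplus[\g,\g]$ with $[\g,\g]$ semisimple; since $G$ is not semisimple, $[\g,\g]$ is a proper subalgebra and so the center $\mathfrak{z}(\g)$ is non-zero. Next I would set up the two ingredients of the product. A linear form $\phi\in\g^*$ is $\ad(\g)$-invariant precisely when it vanishes on $[\g,\g]$ (the span of all brackets), so the reductive splitting lets me pick a non-zero such $\phi$ carried by the center; as $\phi$ is then non-zero on $\mathfrak{z}(\g)$, after rescaling I obtain a central vector $z_0$ with $\phi(z_0)=1$. With these I define
$$u\star v:=\phi(u)\,\phi(v)\,z_0,\qquad u,v\in\g.$$

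It then remains to check the three axioms, each a one-liner. Commutativity is clear. For invariance I compute $\ad_x(u\star v)=\phi(u)\phi(v)\,[x,z_0]=0$ since $z_0$ is central, while $(\ad_x u)\star v+u\star(\ad_x v)=\bigl(\phi([x,u])\phi(v)+\phi(u)\phi([x,v])\bigr)z_0=0$ since $\phi$ is invariant, so every $\ad_x$ is a derivation. For associativity, both $(u\star v)\star w$ and $u\star(v\star w)$ collapse to $\phi(u)\phi(v)\phi(w)\,z_0$ using $\phi(z_0)=1$. Finally $z_0\star z_0=\phi(z_0)^2 z_0=z_0\neq 0$, so $\star$ is non-trivial, and by Corollary \ref{Thm1BenBo6} it yields a special affine connection on $G$ different from $\n^0$.

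I do not expect a genuine obstacle: the whole argument hinges on the ansatz $u\star v=\phi(u)\phi(v)z_0$, which routes the product into the center, where the vanishing of brackets simultaneously forces $\ad(\g)$-invariance and supplies the non-vanishing witness, while the invariant form $\phi$ guarantees associativity. The only point requiring attention is the existence of the pair $(\phi,z_0)$, and this is exactly where the hypothesis of non-semisimplicity — equivalently, a non-trivial center — is used.
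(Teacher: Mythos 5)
Your proposal is correct, and it follows the same overall strategy as the paper: both reduce the statement via Corollary \ref{Thm1BenBo6} to producing a non-trivial Poisson structure on $\g$, and both do so with a rank-one product taking values in the (non-trivial) center. The difference is the choice of invariant bilinear form. The paper sets $u\star v:=\kappa_\g(u,v)e_0$ for a central $e_0$, where associativity holds because both triple products vanish ($\kappa_\g(e_0,\cdot)=0$ since $e_0$ is central, so the product is $2$-step nilpotent); you instead take $u\star v:=\phi(u)\phi(v)z_0$ with $\phi$ an $\ad(\g)$-invariant functional dual to a central $z_0$, where associativity holds because both sides equal $\phi(u)\phi(v)\phi(w)z_0$ and $z_0$ is idempotent. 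Your verification of the three axioms is complete and correct, and the existence of the pair $(\phi,z_0)$ follows exactly as you say from the reductive splitting $\g=\mathfrak{z}(\g)\oplus[\g,\g]$. One point in your favor: your construction also works when $\g$ is abelian, whereas the Killing form then vanishes identically and the paper's product degenerates to the trivial one, so your argument covers a boundary case that the paper's explicit formula does not.
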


\begin{proof}
Let $e_0$ be a nonzero element in the center of the Lie algebra $\g$ of $G$. Define a product $\star:\g\times\g\to\g$ on $\g$ as follows:
\begin{equation*}
u\star v:=\kappa_{\g}(u,v)e_0,
\end{equation*} 
where $\kappa_{\g}:\g\times\g\to\rel$ is the Killing form of $\g$. A straightforward computation shows that $\star$ is a non-trivial Poisson product on $\g$. Hence the result follows by using Corollary \ref{Thm1BenBo6}.
\end{proof}

\begin{ex}\textnormal{
Let $(G_i,H_i,\sigma_i),\,i=1,2$ be two simple symmetric pairs and $\g_i=\m_i\oplus\h_i$ their corresponding canonical decompositions. We assume that $\ad^{\m_i}:\h_i\rightarrow\End(\m_i)$ are faithfuls and $\g_1,\g_2$ are semi-simple Lie algebras. It is clear that $(G^\times:=G_1\times G_2,H^\times:=H_1\times H_2,\sigma^\times:=\sigma_1\times\sigma_2)$ is a symmetric pair and the corresponding canonical decomposition is $\g^\times=\m^\times\oplus\h^\times$, where
\begin{equation*}
\m^\times=\m_1\oplus\m_2,\et\h^\times=\h_1\oplus\h_2.
\end{equation*}
Since the adjoint representation of $\h_i$ in $\m_i$ is irreducible, we get that $\m_i$ is a simple $\h^\times$-submodule of $\m^\times$. In addition, using the fact that $\g_i$ is semi-simple we obtain (see \cite[pp. 56]{Nom1}) that $[\m_i,\m_i]=\h_i$. Hence, it follows that $(G^\times,H^\times,\sigma^\times)$ is strongly semi-simple.
}\end{ex}

Now the question naturally arises whether an analogous statement for semi-simple symmetric pairs remains true. The answer to this question is no in general as the next example shows.

\begin{ex}\label{ExNonStrongSemiSimple}\textnormal{ Let $H$ be the Lie group given by
\begin{equation*}
H:=\left\{\begin{pmatrix}
A&0\\
0&1
\end{pmatrix}\,|\,\,A\in \SO(3)\right\}.
\end{equation*}
Consider the Lie group $G:=\rel^4\rtimes H$ and define an involutive automorphism $\sigma$ of $G$ by:
\begin{equation*}
\sigma:G\rightarrow G,\quad (x,\widetilde{A})\mapsto (-x,\widetilde{A}),
\end{equation*}
for $x\in\rel^4$ and $\widetilde{A}:=\begin{psmallmatrix}
A&0\\
0&1
\end{psmallmatrix}\in H$. It is easy to check that $(G,H,\sigma)$ is a symmetric pair. Moreover, the canonical decomposition of the Lie algebra $\g$ of $G$ is $\g=\m\oplus_{\rtimes}\h$, where
\begin{equation*}
\m=\big\{(u,0)\in\g\,|\,\,u\in\rel^4\big\},
\end{equation*}
and 
\begin{equation*}
\h=\left\{(0,\widehat{X})\in\g\,|\,\,\widehat{X}:=\begin{pmatrix}
X&0\\
0&0
\end{pmatrix},\,X\in\so(3)\right\}.
\end{equation*}
On the other hand, since the Lie bracket of $\g$ is given by
\begin{equation*}
\big[(u,\widehat{X}),(v,\widehat{Y})\big]:=\left(\widehat{X}v-\widehat{Y}u,\widehat{[X,Y]}\right),\qquad\forall\,\,(u,\widehat{X}),(v,\widehat{Y})\in\g.
\end{equation*}
Then, under the identification of $\m$ with $\rel^4$, we obtain that the isotropy representation of $\h$ in $\m$ is
\begin{equation*}
\begin{array}{ccccc}  \ad^\m & : &\h & \longrightarrow & \gl(4,\rel)
\\& & (0,\widehat{X}) & \longmapsto & \widehat{X}. 
\end{array}
\end{equation*}
Let $(e_i)_{1\leq i\leq 4}$ be the canonical basis of $\rel^4$, then we can easily check that $\m_0:=\Span\{e_4\}$ and $\m_1:=\Span\{e_1,e_2,e_3\}$ are simple $\h$-submodules of $\m$ such that $\m=\m_0\oplus\m_1$. Thus the isotropy representation of $\h$ in $\m$ is completely reducible and therefore $(G,H,\sigma)$ is semi-simple. But $(G,H,\sigma)$ is not strongly semi-simple, because $[\m,\m]=\{0\}$.}

\textnormal{
Now, we will show that there exists a non-trivial special product on $\m\cong\rel^4$. First we identify $\rel^4$ with $\rel^3\times\rel$ and we denote by $\met$ the Euclidean inner product on $\rel^{3}$. Define the following product
\begin{equation*}
\star:\rel^4\times\rel^4\rightarrow\rel^4,\quad\textnormal{written}\quad (x_1,t_1)\star(x_2,t_2):=\left(0,\1x_1,x_2\2\right). 
\end{equation*}
It is obvious that $\star$ is a non-trivial commutative, associative product on $\rel^4$. Moreover, for $x:=(x_1,t_1),y:=(x_2,t_2)\in\rel^4$ and $\widetilde{A}\in H$, we have
\begin{eqnarray*}
\widetilde{A}x\star\widetilde{A}y&=&(Ax_1,t_1)\star (Ax_2,t_2)\\
&=&\left(0,\1Ax_1,Ax_2\2\right)\\
&=&\left(0,\1x_1,x_2\2\right)\\
&=&\widetilde{A}(x\star y).
\end{eqnarray*}
Thus the product $\star$ is $\Ad(H)$-invariant and hence it is a non-trivial special product on $\m$.
}\end{ex}

\section{Examples of Strongly Semi-simple Symmetric Pairs}\label{Section4}

This section is devoted to give some examples of strongly semi-simple symmetric pairs, namely, Cartan's symmetric pairs and Semi-simple Riemannian symmetric pairs. Before going further, we start by recalling some definitions and properties that will be needed later.
In what follows, $(\g,[\,,])$ will be a real Lie algebra and $\kappa_\g:\g\times\g\rightarrow\rel$ its Killing form.  

\begin{defi}
A Cartan involution of $\g$ is an involutive automorphism $\tau$ of $\g$ such that the symmetric bilinear form $\met:\g\times\g\rightarrow\rel$ written $\1u,v\2:=-\kappa_\g(u,\tau(v))$, is positive definite.
\end{defi}

Note that if $\tau$ is a Cartan involution of $\g$, then $\g$ splits as a direct sum of $\h^\tau:=\ker(\tau-\id_{\g})$ and $\m^\tau:=\ker(\tau+\id_{\g})$. Moreover, since $\met$ is positive definite we get that the Killing form $\kappa_\g$ of $\g$ is negative definite on $\h^\tau$ and positive definite on $\m^\tau$. Further, $\met$ is $\ad(\h^\tau)$-invariant and the following inclusions hold
\begin{equation*}
[\h^\tau,\h^\tau]\subseteq \h^\tau,\quad [\h^\tau,\m^\tau]\subseteq\m^\tau,\et [\m^\tau,\m^\tau]\subseteq \h^\tau.
\end{equation*}
The decomposition $\g=\m^\tau\oplus \h^\tau$ is called the \textit{Cartan decomposition} with respect to $\tau$, and the inclusion $[\h^\tau,\m^\tau]\subseteq\m^\tau$ gives rise to a representation $\ad^{\m^\tau}:\h^\tau\rightarrow\End(\m^\tau)$ which also called the isotropy representation of $\h^\tau$ in $\m^\tau$. Note that the fact that $\met$ is positive definite implies that $\g$ is a semi-simple Lie algebra and it is compact if and only if $\tau=\id_\g$.

\begin{prop}\label{CartanProperties}
Let $\tau$ be a Cartan involution of $\g$ and $\g=\m^\tau\oplus\h^\tau$ the corresponding Cartan decomposition. Then 
\begin{enumerate}
\item[$1.$] If $(\g_i)_{i=1}^k$ is a family of simple ideals of $\g$ such that $\g=\bigoplus_{i=1}^k\g_i$, then for $i\neq j$, $\g_i$ and $\g_j$ are mutually orthogonal with respect to $\kappa_\g$.
\item[$2.$] $\h^\tau$ and $\m^\tau$ are mutually orthogonal with respect to $\kappa_\g$.
\item[$3.$] If $\p$ is a nonzero $\h^\tau$-submodule of $\m^\tau$, then $[\m^\tau,\p]=[\p,\p]$. In particular, $[\p,\p]\neq\{0\}$ and $\p\oplus[\p,\p]$ is an ideal of $\g$. 
\end{enumerate}
\end{prop}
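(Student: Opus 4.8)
The plan is to handle the three assertions in turn, the first two being quick consequences of standard invariance properties of the Killing form, with essentially all of the work concentrated in the third.

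For the first assertion I would use that distinct simple ideals commute: since $\g_i$ and $\g_j$ are ideals with $\g_i\cap\g_j=\{0\}$ for $i\neq j$, we have $[\g_i,\g_j]\subseteq\g_i\cap\g_j=\{0\}$. Hence for $x\in\g_i$ and $y\in\g_j$ the endomorphism $\ad_x\ad_y$ sends $\g$ first into $\g_j$ (as $\g_j$ is an ideal) and then into $[\g_i,\g_j]=\{0\}$, so $\kappa_\g(x,y)=\tr(\ad_x\ad_y)=0$. For the second assertion I would invoke the automorphism-invariance $\kappa_\g(\tau u,\tau v)=\kappa_\g(u,v)$: for $h\in\h^\tau$ and $m\in\m^\tau$ this gives $\kappa_\g(h,m)=\kappa_\g(\tau h,\tau m)=-\kappa_\g(h,m)$, whence $\kappa_\g(h,m)=0$.

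The heart of the matter is the third assertion. The key preliminary observation is that, since $\tau=-\id_\g$ on $\m^\tau$, the form $\met$ restricts to $\kappa_\g$ on $\m^\tau$, whereas since $\tau=\id_\g$ on $\h^\tau$ it restricts to $-\kappa_\g$ there; in particular $\kappa_\g$ is negative definite, hence nondegenerate, on $\h^\tau$. Because $\met$ is positive definite and $\ad(\h^\tau)$-invariant, the $\met$-orthogonal complement $\q$ of $\p$ inside $\m^\tau$ is again an $\h^\tau$-submodule with $\m^\tau=\p\oplus\q$, and $\p,\q$ are $\kappa_\g$-orthogonal. I would then show $[\p,\q]=\{0\}$: for $u\in\p$ and $v\in\q$ we have $[u,v]\in\h^\tau$, and for every $x\in\h^\tau$ the invariance of $\kappa_\g$ gives $\kappa_\g([u,v],x)=\kappa_\g(u,[v,x])$ with $[v,x]\in[\h^\tau,\q]\subseteq\q$, so this pairing vanishes by $\kappa_\g$-orthogonality of $\p$ and $\q$; nondegeneracy of $\kappa_\g$ on $\h^\tau$ then forces $[u,v]=0$. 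Since $[\m^\tau,\p]=[\p,\p]+[\q,\p]$, this yields $[\m^\tau,\p]=[\p,\p]$.

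It then remains to extract the two consequences. If $[\p,\p]=\{0\}$, then $[\m^\tau,\p]=\{0\}$ as well, and together with $[\h^\tau,\p]\subseteq\p$ this makes the nonzero $\p$ an abelian ideal of $\g$; as $\g$ is semi-simple this is impossible, so $[\p,\p]\neq\{0\}$. Finally, to see that $\p\oplus[\p,\p]$ (a genuine direct sum, since $\p\subseteq\m^\tau$ and $[\p,\p]\subseteq\h^\tau$) is an ideal, I would check stability under bracketing with $\h^\tau$ and with $\m^\tau$ separately, using the identity $[\m^\tau,\p]=[\p,\p]$ and the Jacobi identity; the only nonroutine bracket is $[\m^\tau,[\p,\p]]$, which Jacobi rewrites inside $[[\p,\p],\p]+[\p,[\p,\p]]\subseteq[\h^\tau,\p]\subseteq\p$. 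The main obstacle throughout is this third part, and precisely the vanishing $[\p,\q]=\{0\}$, which is exactly where positive-definiteness of $\met$ enters, through nondegeneracy of $\kappa_\g$ on $\h^\tau$ and the orthogonal $\h^\tau$-module decomposition of $\m^\tau$.
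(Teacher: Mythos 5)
Your proof is correct and follows essentially the same route as the paper: the crux in both is to take the $\met$-orthogonal complement $\p^\perp$ of $\p$ in $\m^\tau$ and use invariance of the Killing form together with definiteness to force $[\p,\p^\perp]=\{0\}$ (the paper computes $\1[u,v],[u,v]\2=0$ directly, while you pair $[u,v]$ against an arbitrary element of $\h^\tau$ and invoke nondegeneracy of $\kappa_\g$ there --- the same computation in a slightly different guise). Your fleshed-out verifications of assertions 1 and 2 and of the ideal property of $\p\oplus[\p,\p]$, which the paper leaves as ``clear,'' are all accurate.
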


\begin{proof}
The first and the second statement are clear. For the last one, let $\p$ be a nonzero $\h^\tau$-submodule of $\m^\tau$ and denote by $\p^\perp\subset\m^\tau$ its orthogonal complement with respect to $\met$, i.e $\m^\tau=\p\oplus\p^\perp$. Take $u\in\p$ and $v\in\p^\perp$, then we have
\begin{eqnarray*}
\1[u,v],[u,v]\2&=&\kappa_\g\big([v,u],[u,v]\big)\\
&=&\kappa_\g\big(v,[u,[u,v]]\big)\\
&=&\1[[u,v],u],v\2\\
&=&0.
\end{eqnarray*}
Thus $[\p,\p^\perp]=\{0\}$ and therefore $[\m^\tau,\p]=[\p,\p]$. If $[\p,\p]=\{0\}$, then $\p$ will be a nonzero abelian ideal of $\g$, which is impossible because $\g$ is semi-simple. Finally, using Jacobi identity we can easily check that $\p\oplus[\p,\p]$ is an ideal of $\g$.
\end{proof}

\begin{defi} 
The symmetric pair $(G,H,\sigma)$ is called a Cartan symmetric pair if the tangent map of $\sigma$ at the identity element (also denoted by $\sigma$) is a Cartan involution of the Lie algebra $\g$ of $G$.
\end{defi} 

\begin{ex}\textnormal{
The example type is $(\SL(n,\rel),\SO(n),\sigma^*)$, where $\sigma^*$ is given by $\sigma^*(A):=\left(A^{-1}\right)^{\T}$. Geometrically, the symmetric space associated to this symmetric pair is the set of all real symmetric positive definite $n$-matrices with determinant $1$.
}\end{ex}

The following proposition shows that all Cartan's symmetric pairs are strongly semi-simple.

\begin{prop}
If $(G,H,\sigma)$ is a Cartan symmetric pair, then it is strongly semi-simple.
\end{prop}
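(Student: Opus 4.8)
The plan is to reduce the claim to the structure theorem for semi-simple symmetric pairs, namely that strong semi-simplicity is equivalent to decomposing $\m$ into simple $\h$-submodules each of which fails to commute with itself. Since $(G,H,\sigma)$ is a Cartan symmetric pair, the tangent involution $\sigma=\tau$ is a Cartan involution, so the Killing form is positive definite relative to $\met$, the Lie algebra $\g$ is semi-simple, and the canonical decomposition $\g=\m\oplus\h$ coincides with the Cartan decomposition $\g=\m^\tau\oplus\h^\tau$. In particular $\g$ semi-simple forces the isotropy representation to be completely reducible (this is the semi-simple case), so it suffices to exhibit a decomposition of $\m$ into simple $\h$-submodules $\m_i$ satisfying $[\m_i,\m_i]\neq\{0\}$.

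First I would invoke complete reducibility of $\ad^\m:\h\to\End(\m)$ to write $\m=\m_1\oplus\cdots\oplus\m_k$ with each $\m_i$ a simple (hence nonzero) $\h$-submodule of $\m=\m^\tau$. The only thing left to verify is the nondegeneracy condition $[\m_i,\m_i]\neq\{0\}$ for every $i$. This is exactly where Proposition \ref{CartanProperties}(3) does the work: applying it to the nonzero $\h^\tau$-submodule $\p=\m_i$ of $\m^\tau$ yields $[\m_i,\m_i]\neq\{0\}$ directly. Hence the family $(\m_i)_{i=1}^k$ is a strongly decomposition of $\m$ in the sense of \eqref{strongsemisimple}, and $(G,H,\sigma)$ is strongly semi-simple by definition.

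The main obstacle, such as it is, lies in justifying the two structural inputs rather than in any computation. I would first make explicit that a Cartan symmetric pair is semi-simple, which follows because positive definiteness of $\met$ forces $\kappa_\g$ to be nondegenerate, so $\g$ is semi-simple and its isotropy representation is completely reducible by Weyl's theorem. The real leverage comes from Proposition \ref{CartanProperties}(3), whose conclusion $[\p,\p]\neq\{0\}$ is precisely the obstruction one must rule out; the argument there used the orthogonality $[\p,\p^\perp]=\{0\}$ together with the semi-simplicity of $\g$ (no nonzero abelian ideal). Since that proposition has already been established for arbitrary nonzero $\h^\tau$-submodules, each simple $\m_i$ qualifies and the nondegeneracy condition holds automatically, completing the proof without further effort.
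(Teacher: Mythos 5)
Your overall strategy coincides with the paper's: obtain a decomposition of $\m$ into simple $\h$-submodules $\m_i$ and then apply Proposition \ref{CartanProperties}(3) to each nonzero summand to conclude $[\m_i,\m_i]\neq\{0\}$. That second half is exactly the paper's argument and is fine as written.

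The step that is not correctly justified is complete reducibility of the isotropy representation. You derive it from the semi-simplicity of $\g$ ``by Weyl's theorem,'' but Weyl's theorem concerns representations \emph{of a semi-simple Lie algebra}, and the isotropy representation $\ad^\m:\h\to\End(\m)$ is a representation of $\h$, which need not be semi-simple: for the Cartan symmetric pair $(\SL(2,\rel),\SO(2),\sigma^*)$ one has $\h\cong\so(2)$, which is abelian. Semi-simplicity of $\g$ alone does not hand you complete reducibility of an $\h$-module without a further argument (e.g.\ that $\h$ is reductive in $\g$ because $\kappa_\g$ restricts nondegenerately to $\h$ --- a different theorem from Weyl's, and one you would then have to cite and apply). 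The paper sidesteps all of this with a one-line argument you should adopt: $\met$ is an $\ad(\h^\tau)$-invariant inner product, so each $\ad_a$ with $a\in\h$ is skew-symmetric on $\m$, hence the orthogonal complement of any $\h$-submodule of $\m$ is again an $\h$-submodule; this yields complete reducibility directly. With that substitution your proof is complete and is essentially the paper's.
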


\begin{proof}
First, let us show that the isotropy representation of $\h$ in $\m$ is completely reducible. To do this it suffices to prove that each $\h$-submodule of $\m$ possesses an $\h$-submodule complement. Let $\p\subseteq\m$ be an $\h$-submodule of $\m$ and denote by $\p^\perp\subseteq\m$ its orthogonal complement with respect to $\met$. Clearly that $\m=\p\oplus\p^\perp$, and for $u\in\p^\perp,v\in\p,a\in\h$ we have
\begin{equation*}
\1[a,u],v\2=-\1u,[a,v]\2=0.
\end{equation*} 
Hence $\p^\perp$ is an $\h$-submodule of $\m$. Now, if $(\m_i)_{i=1}^k$ is a family of simple $\h$-submodules of $\m$ such that $\m=\bigoplus_{i=1}^k\m_i$, then using the last assersion in Proposition \ref{CartanProperties} we deduce that  $[\m_{i},\m_{i}]\neq\{0\}$ for all $i\in\{1,\ldots,k\}$. Thus $(\m_i)_{i=1}^k$ is a strongly decomposition of $\m$.
\end{proof}

Now, recall that, if $\g$ is a simple Lie algebra and $H$ is compact, then $(G,H,\sigma)$ is a simple symmetric pair with $\dim \m>1$. It follows that the canonical affine connection is the only special affine connection on $M$. The following proposition shows that the last conclusion remains true if we replace the simplicity of $\g$ by the semi-simplicity.

\begin{prop}
If $\g$ is a semi-simple Lie algebra and $H$ is compact, then the symmetric pair $(G,H,\sigma)$ is strongly semi-simple.
\end{prop}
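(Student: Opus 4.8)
The plan is to follow the template of the Cartan case, supplying the extra structure theory needed to exclude abelian summands. Since $H$ is compact, averaging an arbitrary inner product over $H$ produces an $\Ad(H)$-invariant inner product on $\m$ for which every $\ad^\m_a$, $a\in\h$, is skew-symmetric; exactly as in the Cartan case, the orthogonal complement of an $\h$-submodule is again an $\h$-submodule, so $\ad^\m$ is completely reducible. Hence it suffices to produce inside $\m$ a splitting into simple $\h$-submodules $\m_i$ with $[\m_i,\m_i]\neq\{0\}$; equivalently, I must rule out nonzero abelian $\h$-submodules of $\m$.

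To control the brackets I would decompose $\g$ into $\sigma$-stable ideals. Writing $\g=\bigoplus_s\g_s$ with $\g_s$ simple, the automorphism $\sigma$ permutes the $\g_s$ in orbits of length one or two; collecting the orbits presents $\g$ as a sum of $\sigma$-stable ideals, each being either a $\sigma$-fixed simple ideal $\g_s$ or a swapped pair $\g_s\oplus\g_t$. This induces compatible decompositions $\m=\bigoplus_\lambda\m^{(\lambda)}$ and $\h=\bigoplus_\lambda\h^{(\lambda)}$ in which $\h^{(\mu)}$ acts trivially on $\m^{(\lambda)}$ for $\mu\neq\lambda$, so it is enough to exhibit the required decomposition inside each $\m^{(\lambda)}$, each block carrying the compact isotropy subalgebra $\h^{(\lambda)}$.

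For a swapped pair one is in the group case: $\h^{(\lambda)}\cong\g_s$ is compact, $\m^{(\lambda)}$ is $\h$-isomorphic to the (simple) adjoint module of $\g_s$, and $[x-\sigma x,\,y-\sigma y]=[x,y]+\sigma[x,y]$ is nonzero for suitable $x,y$ since $[\g_s,\g_s]=\g_s$. For a fixed simple ideal $\g_s$ with $\m^{(\lambda)}\neq\{0\}$ I would choose a Cartan involution $\theta$ of $\g_s$ commuting with $\sigma|_{\g_s}$ and compare Killing signs: since $\h^{(\lambda)}$ is compact, $\kappa_{\g_s}$ is negative semidefinite on $\h^{(\lambda)}$, while $\kappa_{\g_s}$ is positive definite on $\p:=\g_s^{-\theta}$ and negative definite on $\mathfrak{k}:=\g_s^{\theta}$; hence $\h^{(\lambda)}\subseteq\mathfrak{k}$ and $\p\subseteq\m^{(\lambda)}$. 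If $\g_s$ is noncompact, then $\p\neq\{0\}$ and Proposition \ref{CartanProperties}(3), applied to the Cartan involution $\theta$ of $\g_s$, shows $\p\oplus[\p,\p]$ is a nonzero ideal of the simple algebra $\g_s$, so it equals $\g_s$ and $[\p,\p]=\mathfrak{k}$; but $\p\subseteq\m^{(\lambda)}$ forces $[\p,\p]\subseteq\h^{(\lambda)}$, whence $\mathfrak{k}=\h^{(\lambda)}$, the eigenspaces of $\theta$ and $\sigma|_{\g_s}$ coincide, and $\sigma|_{\g_s}=\theta$ is a Cartan involution — so this block is a Cartan symmetric pair and is strongly semi-simple by the case established just above. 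If instead $\g_s$ is compact, then $-\kappa_{\g_s}$ is a positive-definite invariant form and a computation analogous to that of Proposition \ref{CartanProperties}(3) gives $[\q,\q^\perp]=\{0\}$ for every nonzero $\h^{(\lambda)}$-submodule $\q$, so an abelian such $\q$ would be an abelian ideal of $\g_s$, which is impossible.

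The main obstacle is exactly this dichotomy for a fixed simple ideal. In contrast to the Cartan case, $\kappa_\g|_\m$ is indefinite in general, so the Killing-form computation cannot be applied directly to $(G,H,\sigma)$; the device that unlocks everything is a Cartan involution $\theta$ commuting with $\sigma$ (a standard existence result), after which the simplicity of $\g_s$ forces a compact fixed-point subalgebra to be maximal compact, i.e. $\sigma|_{\g_s}$ to be Cartan, the genuinely compact case being treated separately. Assembling the simple summands furnished by the three block types yields a decomposition of $\m$ into simple $\h$-submodules with nonzero self-brackets, which is the desired strong decomposition and completes the proof.
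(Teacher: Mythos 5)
Your proof is correct, but it takes a genuinely different route from the paper's. The paper argues globally and avoids any decomposition of $\g$ into ideals: it introduces the $\met$-symmetric endomorphism $\phi$ of $\m$ defined by $\kappa_\g(u,v)=\1\phi(u),v\2$ for an averaged $\Ad(H)$-invariant inner product $\met$ on $\g$, notes that $\phi$ is invertible and commutes with the isotropy action, and refines its eigenspace decomposition into mutually $\met$-orthogonal simple summands $\m_i$ on each of which $\kappa_\g$ is a nonzero multiple $t_{i'}$ of $\met$; then $\kappa_\g([u_i,u_j],[u_i,u_j])=t_{j'}\1 u_j,[[u_i,u_j],u_i]\2$ vanishes because $[[u_i,u_j],u_i]\in\m_i$ is $\met$-orthogonal to $u_j\in\m_j$, and negative definiteness of $\kappa_\g$ on $\h$ forces $[\m_i,\m_j]=\{0\}$ for $i\neq j$, so an abelian $\m_i$ would be an abelian ideal of the semi-simple $\g$. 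You instead split $\g$ into $\sigma$-stable ideals and classify the blocks (group type, noncompact simple reduced to the Cartan case, compact simple). Both arguments work; the paper's is shorter and entirely self-contained, needing nothing beyond the averaged inner product, whereas yours imports the existence of a Cartan involution commuting with $\sigma|_{\g_s}$ --- a genuine, if standard, structure theorem --- but in return identifies exactly what kind of symmetric pair each block is, which is more informative. One step of yours should be tightened: negative semidefiniteness of $\kappa_{\g_s}$ on $\h^{(\lambda)}$ does not by itself force $\h^{(\lambda)}\subseteq\mathfrak{k}$ (nilpotent elements also satisfy $\kappa_{\g_s}(x,x)=0$); you must use the $\theta$-stability of $\h^{(\lambda)}$ --- available precisely because $\theta$ commutes with $\sigma$ --- to write $\h^{(\lambda)}=(\h^{(\lambda)}\cap\mathfrak{k})\oplus(\h^{(\lambda)}\cap\p)$ and then kill the second summand by comparing the signs of $\kappa_{\g_s}$.
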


\begin{proof}
Since $H$ is compact, let $\met$ be an $\Ad(H)$-invariant inner product on $\g$, and define a linear endomorphism $\phi:\m\to\m$ by
\begin{equation*}
\kappa_\g(u,v)=\1\phi(u),v\2,\qquad\forall\,u,v\in\m.
\end{equation*}
A direct computation using the fact that $\met$ and $\kappa_\g$ are both $\Ad(H)$-invariant, symmetric bilinear forms, one can easily check that $\phi$ is symmetric with respect to $\met$ and commutes with all $\ad_u$ for $u\in\h$. Thus, there is a direct sum decomposition $\m=\bigoplus_{i=1}^r\p_i$ such that $\phi_{|\p_i}=t_i\id_{\p_i}$ with $t_i\in\rel^*$ and $t_i\neq t_j$ for $i\neq j$. Moreover, $(\p_i)_{i=1}^r$ are $\h$-submodules of $\m$ which are mutually orthogonal with respect to $\met$. Hence there exists a direct sum decomposition $\m=\bigoplus_{i=1}^k\m_i$ such that each $\m_i$ is a simple $\h$-submodule of $\m$ which is contained in some $\p_{i'}$ and $(\m_i)_{i=1}^k$ are mutually orthogonal with respect to $\met$. Furthermore, if $u_i\in\m_i$ and $u_j\in\m_j$, then
\begin{eqnarray*}
\kappa_\g([u_i,u_j],[u_i,u_j])&=&\kappa_\g(u_j,[[u_i,u_j],u_i])\\
&=&t_{j'}\1u_j,[[u_i,u_j],u_i]\2\\
&=&0.
\end{eqnarray*}
Since $\ad_u:\g\rightarrow\g$ is skew-symmetric with respect to $\met$ for $u\in\h$ and $\g$ is semi-simple, one can easily check that $\kappa_\g$ is negative definite on $\h$ and therefore $[\m_i,\m_j]=\{0\}$. Thus $[\m_i,\m_i]\neq \{0\}$ for all $i\in\{1,\ldots,k\}$, which proves that $(G,H,\sigma)$ is strongly semi-simple.
\end{proof}



\section{Examples of Symmetric Spaces with Special Affine Connections}\label{Section5}
This section is devoted to give examples of symmetric spaces on which there is a special affine connection which is different from the canonical one. We start by recalling some basic facts on how one can get a symmetric space from a Jordan algebra.

\begin{defi} A Jordan algebra is a commutative algebra $(\A,.)$ in which the following identity 
\begin{equation*}
x.(x^2.y)=x^2.(x.y),
\end{equation*}
holds.
\end{defi}

\begin{ex}
The trivial example is a commutative, associative algebra.
\end{ex}

It is well known (see for example \cite{Jacob}) that to each Jordan algebra $(\A,.)$, we can associate (Tits-Kantor-Koecher construction) a $\intg_2$-grading of a Lie algebra $\g^\A=\h^\A\oplus\m^\A$ as follows: we define
\begin{equation*}
\g^\A_{_{-1}}:=\A,\qquad\g^\A_{_{0}}:=\Span\big\{\el_x,\left[\el_y,\el_z\right]\,/\,x,y,z\in\A\big\}\subset\End(\A),
\end{equation*}
and 
\begin{equation*}
\g^\A_{_{1}}:=\Span\big\{\el,\left[\el_x,\el\right]\,/\,x\in\A\big\}\subset\homo(S^2\A,\A),
\end{equation*}
where $\el(x,y)=\el_xy:=x.y$, and $\left[\el_x,\el\right](y,z):=\left[\el_x,\el_y\right](z)-\el_{x.y}(z)$, for $x,y,z\in\A$. Then we set
\begin{equation*}
\g^\A:=\g^\A_{_{0}}\oplus\g^\A_{_{-1}}\oplus\g^\A_{_{1}},
\end{equation*}
and we get that $\g^\A$ is a short $\intg$-grading of a Lie algebra with the following Lie bracket:
\begin{enumerate}
\item[•] $[x,y]=[A,B]:=0$, for $x,y\in\g^\A_{_{-1}}$ and $A,B\in\g^\A_{_{1}}$;
\item[•] $[F,x]:=F(x)$, for $x\in\g^\A_{_{-1}}$ and $F\in\g^\A_{_{0}}$;
\item[•] $[F,B](x,y):=F\big(B(x,y)\big)-B\big(F(x),y\big)-B\big(x,F(y)\big)$, for $x,y\in\g^\A_{_{-1}}$, $F\in\g^\A_{_{0}}$ and $B\in\g^\A_{_{1}}$;
\item[•] $[B,x](y):=B(x,y)$, for $x\in\g^\A_{_{-1}}$ and $B\in\g^\A_{_{1}}$.
\end{enumerate}

Hence, if we set $\h^\A:=\g^\A_{_{0}}$ and $\m^\A:=\g^\A_{_{-1}}\oplus\g^\A_{_{1}}$, we deduce that $\g^\A=\h^\A\oplus\m^\A$ is a $\intg_2$-grading of a Lie algebra, i.e.,
\begin{equation*}
\left[\h^\A,\h^\A\right]\subseteq\h^\A,\quad\left[\h^\A,\m^\A\right]\subseteq\m^\A,\et\left[\m^\A,\m^\A\right]\subseteq\h^\A.
\end{equation*}
In summary, any Jordan algebra $(\A,.)$ gives rise to a $\intg_2$-grading of a Lie algebra $\g^\A=\h^\A\oplus\m^\A$, and therefore (see \cite[Theorem I$.1.3$]{Bertram}) to a simply connected symmetric space $M^\A$.\\

Now, we introduce a particular subclass of associative algebras, which will be used to construct our examples.

\begin{defi}
An associative algebra $(\A,.)$ is called $0$-associative if 
\begin{equation*}
x.y.z=0,\qquad\forall\,x,y,z\in\A.
\end{equation*}
\end{defi}
\begin{ex}
Let $(V,+)$ be an $n$-dimensional vector space, and $(e_i)_{1\leq i\leq n}$ any basis of it. For $i_1,i_2\in\left\{1,\ldots,n\right\}$ fixed such that $i_1\neq i_2$, the product given by
\begin{equation*}
e_{i_1}.e_{i_1}=e_{i_2},\et e_{i}.e_{j}=0,
\end{equation*}
for $\{i,j\}\neq\{i_1,i_1\}$, is (commutative) $0$-associative.
\end{ex}

\begin{ex} Let $(\A,.)$ be a symmetric Leibniz algebra, i.e., an algebra $(\A,.)$ such that for any $x,y\in\A$, we have
\begin{equation*}
\left[\el_x,\el_y\right]=\el_{x.y},\et \left[\operatorname{R}_x,\operatorname{R}_y\right]=\operatorname{R}_{y.x},
\end{equation*}
where $\el_x,\operatorname{R}_x\in\End(\A)$ are defined by $\el_x(y):=x.y$ and $\operatorname{R}_x(y):=y.x$. If we consider the product $\ast$ on $\A$ given by
\begin{equation*}
x\ast y:=x.y+y.x,\qquad\forall\,x,y\in\A,
\end{equation*}
then a small computation shows that $(\A,\ast)$ is a (commutative) $0$-associative algebra.
\end{ex}

The proof of the following proposition is a matter of pure computation and is thus omitted.

\begin{prop} Let $(\A,.)$ be a commutative, associative algebra, then $\h^\A$ is an abelian Lie subalgebra of $\g^\A$. Moreover, if $(\A,.)$ is $0$-associative, then $\g^\A$ is a $2$-step nilpotent algebra.
\end{prop}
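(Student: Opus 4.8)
The plan is to verify both assertions by direct computation with the explicit Lie bracket on $\g^\A=\g^\A_{_{0}}\oplus\g^\A_{_{-1}}\oplus\g^\A_{_{1}}$ given just above, using the commutativity and associativity of $(\A,.)$. For the first assertion, that $\h^\A=\g^\A_{_{0}}$ is abelian, I would recall that $\g^\A_{_{0}}$ is spanned by the operators $\el_x$ and the brackets $[\el_y,\el_z]$ acting on $\A$. Since $(\A,.)$ is associative, the left-multiplication operators satisfy $\el_x\ro\el_y=\el_{x.y}$ (this is exactly the statement $x.(y.w)=(x.y).w$), and since $(\A,.)$ is commutative we also have $\el_x\ro\el_y=\el_{x.y}=\el_{y.x}=\el_y\ro\el_x$. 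Hence all the operators $\el_x$ commute with one another, which immediately forces $[\el_y,\el_z]=0$ for all $y,z$. Therefore $\g^\A_{_{0}}$ is in fact reduced to $\Span\{\el_x\,|\,x\in\A\}$ and, being a commuting family of operators, it is an abelian Lie subalgebra. This is the conceptually clean half.

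For the second assertion, assume in addition that $(\A,.)$ is $0$-associative, i.e.\ $x.y.z=0$ for all $x,y,z$. The first thing to observe is that $0$-associativity makes $\el_{x.y}=0$: indeed $\el_{x.y}(w)=(x.y).w=x.y.w=0$. Combined with associativity this gives $\el_x\ro\el_y=\el_{x.y}=0$, so not only does $\g^\A_{_{0}}=\Span\{\el_x\}$ collapse as above, but the product of any two of its generators vanishes as operators on $\A$. I would then show that $\g^\A$ is $2$-step nilpotent by checking that every iterated bracket of length three dies. Since $\h^\A$ is already abelian, the only brackets that can survive are those mixing $\h^\A$ with $\m^\A$ or pairing the two pieces $\g^\A_{_{-1}}$ and $\g^\A_{_{1}}$; by the grading, $[\m^\A,\m^\A]\subseteq\h^\A$ and $[\h^\A,\m^\A]\subseteq\m^\A$, so I expect the derived flag to satisfy $[\g^\A,\g^\A]\subseteq\h^\A\oplus(\text{image terms in }\m^\A)$ and then $[\g^\A,[\g^\A,\g^\A]]=\{0\}$.

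Concretely, the triple brackets to kill are of the shapes $[\el_x,[\el_y,z]]$, $[\el_x,[B,z]]$, and mixed versions involving $\g^\A_{_{1}}$. Using $[\el_y,z]=\el_y(z)=y.z\in\g^\A_{_{-1}}$ followed by $[\el_x,y.z]=x.(y.z)=x.y.z=0$ handles the purely left-multiplicative chains, and the defining formula $[\el_x,\el](y,z)=[\el_x,\el_y](z)-\el_{x.y}(z)$ shows the $\g^\A_{_{1}}$ generators are likewise built from products that $0$-associativity annihilates in the next bracket. The main obstacle, and the step demanding care, is the bookkeeping on the $\g^\A_{_{1}}$ component: one must trace each of the bracket formulas ($[F,B]$, $[B,x]$, $[F,x]$) through a second bracketing and confirm that every resulting expression contains a factor $x.y.z$ and hence vanishes, rather than assuming it by symmetry. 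Once all four bracket rules are checked to produce only length-two survivors, $2$-step nilpotency follows. I would organize the verification by the three homogeneous degrees $-1,0,1$ so that the grading $[\g^\A_i,\g^\A_j]\subseteq\g^\A_{i+j}$ bounds which triple brackets can be nonzero and keeps the case analysis finite.
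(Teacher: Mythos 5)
The paper gives no proof of this proposition (it is declared ``a matter of pure computation'' and omitted), so there is nothing to compare your argument against line by line; I can only assess the argument itself. Your proof of the first assertion is correct and complete: associativity gives $\el_x\ro\el_y=\el_{x.y}$, commutativity gives $\el_{x.y}=\el_{y.x}$, hence the left multiplications pairwise commute, $[\el_y,\el_z]=0$, and $\h^\A=\g^\A_{_{0}}=\Span\{\el_x\,|\,x\in\A\}$ is abelian.

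The second assertion is where there is a genuine gap, and it sits exactly at the step you flag as ``demanding care'' and then defer. You claim that every iterated bracket of length three ``contains a factor $x.y.z$ and hence vanishes.'' This fails for the chain that starts at the generator $\el\in\g^\A_{_{1}}$ and descends twice through $\g^\A_{_{-1}}$: the bracket rules give $[\el,x]=\el_x\in\g^\A_{_{0}}$ and then
\begin{equation*}
\big[[\el,x],y\big]=[\el_x,y]=\el_x(y)=x.y,
\end{equation*}
which involves a product of only \emph{two} elements of $\A$ and is therefore not annihilated by $0$-associativity. Concretely, for the paper's own example $e_{i_1}.e_{i_1}=e_{i_2}$ (all other products zero) the elements $\el,\ e_{i_1},\ \el_{e_{i_1}},\ e_{i_2}$ satisfy $[\el,e_{i_1}]=\el_{e_{i_1}}$ and $[\el_{e_{i_1}},e_{i_1}]=e_{i_2}\neq 0$, so $[[\g^\A,\g^\A],\g^\A]\neq\{0\}$. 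What the degree-by-degree computation actually yields is $[\g^\A,\g^\A]=\g^\A_{_{0}}\oplus(\A.\A)$, then $[[\g^\A,\g^\A],\g^\A]=\A.\A$, and only at the next step $[[[\g^\A,\g^\A],\g^\A],\g^\A]=\{0\}$: thus $\g^\A$ is nilpotent of class at most $3$ (and metabelian, since $[\g^\A,\g^\A]$ is abelian), and it is $2$-step nilpotent in the usual sense only when $\A.\A=\{0\}$, i.e.\ when the product is trivial. So your verification scheme cannot be completed as announced; carried out honestly it proves the weaker (and, as far as I can see, the correct) statement ``nilpotent of class at most $3$,'' and it in fact exhibits a counterexample to literal $2$-step nilpotency.
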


Now, we can give a way of getting examples of symmetric spaces on which there is a special affine connection which is different from the canonical one. More precisely, we have:

\begin{prop}\label{ExpSpAffCon}
Let $(\A,.)$ be a commutative, $0$-associative algebra, then there is a special affine connection on its associated symmetric space $M^\A$ which is different from the canonical one.
\end{prop}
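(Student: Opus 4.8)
The plan is to exhibit an explicit non-trivial special product on $\m^\A$ and then invoke Theorem \ref{thm1}, which guarantees that such a product corresponds to a special affine connection different from $\n^0$ (the latter corresponding to the trivial product). By the correspondence, it suffices to construct a commutative, associative, and $\Ad(H^\A)$-invariant product $\star$ on $\m^\A=\g^\A_{_{-1}}\oplus\g^\A_{_{1}}$ which is not identically zero. The natural candidate is to define $\star$ using the underlying Jordan/associative structure of $\A$, landing in the $\g^\A_{_{-1}}=\A$ summand. Concretely, for elements written according to the splitting $\m^\A=\A\oplus\g^\A_{_{1}}$, I would set $(x,A)\star(y,B):=(x.y,0)$, so that $\star$ uses only the $\A$-components via the product of $\A$ and outputs into the $\g^\A_{_{-1}}$-component.

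First I would verify the purely algebraic properties of $\star$. Commutativity is immediate from the commutativity of $(\A,.)$. For associativity, the key computation is that $\big((x,A)\star(y,B)\big)\star(z,C)=(x.y,0)\star(z,C)=((x.y).z,0)$ and $(x,A)\star\big((y,B)\star(z,C)\big)=(x,A)\star(y.z,0)=(x.(y.z),0)$; these agree by associativity of $\A$, so $\star$ is associative. Non-triviality holds precisely when the product of $\A$ is non-zero, which I would arrange by assumption (otherwise $\A$ is trivial and the statement is vacuous — one should note that a $0$-associative algebra can still have a non-vanishing product, as in the first Example above, since the condition only kills triple products). Note that the $0$-associativity hypothesis is what will make the invariance check tractable, because it forces the relevant action of $\h^\A$ to interact trivially with the already-degree-lowering product.

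The main obstacle, and the step requiring genuine care, is the $\Ad(H^\A)$-invariance, equivalently (at the infinitesimal level) the $\h^\A$-invariance condition $F\cdot(\xi\star\eta)=(F\cdot\xi)\star\eta+\xi\star(F\cdot\eta)$ for all $F\in\h^\A=\g^\A_{_{0}}$ and $\xi,\eta\in\m^\A$. Here I must unwind how $\h^\A$ acts on the two graded pieces: on $\g^\A_{_{-1}}=\A$ the action of $F$ is $F(x)$ (a sum of terms $\el_x$ and brackets $[\el_y,\el_z]$ applied to $x$), and on $\g^\A_{_{1}}$ it is given by the bracket formula $[F,B]$. Since $\star$ outputs into $\A$ and depends only on the $\A$-components of its inputs, the verification reduces to checking $F(x.y)=F(x).y+x.F(y)$ in $\A$ for the generators $F=\el_w$ and $F=[\el_u,\el_v]$. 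For $F=\el_w$ this is $w.(x.y)=(w.x).y+x.(w.y)$, which under $0$-associativity forces all three triple products to vanish, giving $0=0$; for $F=[\el_u,\el_v]$ the analogous triple-product terms again all vanish. Thus $0$-associativity is exactly what guarantees invariance, since each term in the Leibniz identity is a triple product that dies. Assembling these checks shows $\star$ is a non-trivial special product on $\m^\A$, and Theorem \ref{thm1} then yields the desired special affine connection on $M^\A$ distinct from the canonical one.
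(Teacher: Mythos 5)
Your proof is correct and takes essentially the same route as the paper: it uses exactly the same product $(x+A)\star(y+B):=x.y$ on $\m^\A=\g^\A_{_{-1}}\oplus\g^\A_{_{1}}$ and invokes Theorem \ref{thm1}, with your explicit check that $0$-associativity annihilates every triple product in the $\h^\A$-invariance (Leibniz) identity being precisely the computation the paper leaves as ``one can easily check.'' The only caveat, which you already flag, is that non-triviality of $\star$ requires the product of $\A$ to be non-zero --- a degenerate case the paper's proof also leaves implicit.
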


\begin{proof}
According to Theorem \eqref{thm1} it suffices to define a commutative, associative, and $\ad(\h^\A)$-invariant product on $\m^\A$. We consider the following product on $\m^\A$ given by
\begin{equation*}
\star:\m^\A\times\m^\A\to\m^\A,\quad (x+A)\star(y+B):=x.y,
\end{equation*}
where $"."$ is the commutative, $0$-associative product of $\A$. One can easily check that the product $\star$ is commutative, associative, and $\ad(\h^\A)$-invariant. 
\end{proof}

\section{Holonomy Lie Algebra of Special Affine Connections}\label{Section6}
In this last section, we compute the holonomy Lie algebra of a special affine connection. But first, let us start with some background that should be known. 

Given an affine connection $\n$ on $M$, for any loop $\gamma$ at $p\in M$ the parallel transport along $\gamma$ is a linear isomorphism of $T_pM$, and the set of such linear isomorphisms for all loops at $p$ forms a group which is called the \textit{holonomy group} of $\n$ based at $p$ and denoted by $\hol_p(\n)$. The \textit{restricted holonomy group} $\hol^0_p(\n)$ is the subgroup composed of parallel transports along all contractible loops at $p$. It is well known (see \cite[Chap 2. Theorem 4.2]{Kob1}) that $\hol^0_p(\n)$ is the identity component of $\hol_p(\n)$ and that $\hol^0_p(\n)$ is a connected Lie group. The \textit{holonomy Lie algebra} of $\n$ based at $p$ is the Lie algebra of $\hol^0_p(\n)$. On the other hand, consider the vector subspace $\hola_p^\n$ of $\End(T_pM)$ which is generated by all linear endomorphisms of the form $R^\n(u,v),(\n_wR^\n)(u,v),(\n_z\n_wR^\n)(u,v),\ldots$, where $u,v,w,z,\ldots$ are arbitrary tangent vectors at $p$. It was shown in \cite[Lemma 4.2]{Nij} that it is a Lie subalgebra of $\End(T_pM)$ and we call it the \textit{infinitesimal holonomy Lie algebra} at $p$. The immersed Lie subgroup of $\GL(T_pM)$ generated by $\hola_p^\n$ is the \textit{infinitesimal holonomy group} at $p$. The main result (see \cite[Theorem 7]{Nij}) is that the restricted holonomy group is equal to the infinitesimal holonomy group at every point.

According to our discussion above, since the curvature tensor $R^0$ of the canonical affine connection $\n^0$ is parallel (i.e., $\n^0R^0=0$), then under the identification of $\m$ with $T_oM$, the holonomy Lie algebra of $\n^0$ at the origin $o\in M$ is given by
\begin{equation*}
\hola^{\n^0}_o=\ad_{[\m,\m]}.
\end{equation*}  
If $\n$ is an arbitrary $G$-invariant affine connection on $M$, then by \cite[Chap 10. Theorem 4.4]{Kob1} and under the identification of $\m$ with $T_oM$, the holonomy Lie algebra of $\n$ at $o\in M$ is the smallest Lie subalgebra $\hola^\n_o$ of $\End(\m)$ which satisfies the following two conditions
\begin{enumerate}
\item\label{1LemHolLiAlg} For all $u,v\in\m,\, R^\n(u,v)\in\hola^\n_o$;
\item\label{2LemHolLiAlg} For all $u\in\m,\, \left[\alpha^\n_u,\hola^\n_o\right]\subseteq\hola^\n_o$,
\end{enumerate} 
where $\alpha^\n:\m\times\m\to\m$ is the bilinear map associated to $\n$ and $\alpha^\n_u\in\End(\m)$ is defined by $\alpha^\n_u(v):=\alpha^\n(u,v)$. Although the holonomy Lie algebra of a $G$-invariant affine connection on $M$ is difficult to compute explicitly, it turns out that the holonomy Lie algebra of a special affine connection on $M$ can be easily computed as the next proposition shows. 

\begin{prop}
Let $\n$ be a special affine connection on $M$. Then the holonomy Lie algebra of $\n$ at the origin $o\in M$ is given by
\begin{equation*}
\hola^\n_{o}=\ad_{[\m,\m]}+\alpha^\n_{[[\m,\m],\m]},
\end{equation*}
where $\alpha^\n:\m\times\m\rightarrow\m$ is the bilinear map associated to $\n$.
\end{prop}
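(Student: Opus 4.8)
The plan is to verify that the claimed subspace $\hola := \ad_{[\m,\m]} + \alpha^\n_{[[\m,\m],\m]}$ is exactly the smallest Lie subalgebra of $\End(\m)$ satisfying the two conditions from \cite[Chap 10. Theorem 4.4]{Kob1}. This requires three things: first, that $\hola$ actually satisfies conditions \eqref{1LemHolLiAlg} and \eqref{2LemHolLiAlg}; second, that $\hola$ is genuinely a Lie subalgebra (closed under bracket); and third, that any Lie subalgebra satisfying the two conditions must contain $\hola$, giving minimality.

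First I would record the curvature formula. Since $\n$ is special, it has the same curvature as $\n^0$, so by the discussion following \eqref{4} we have $R^\n(u,v) = R^0(u,v) = -\ad_{[u,v]}$ for all $u,v\in\m$ (here $\ad_{[u,v]}$ acts on $\m$ via the isotropy representation, using $[[u,v],\m]\subseteq\m$). This immediately gives condition \eqref{1LemHolLiAlg}: every $R^\n(u,v)$ lies in $\ad_{[\m,\m]}\subseteq\hola$. For condition \eqref{2LemHolLiAlg} I must compute $[\alpha^\n_u,\hola]$ for $u\in\m$. On the $\ad_{[\m,\m]}$ part, for $w\in[\m,\m]\subseteq\h$ I would evaluate $[\alpha^\n_u,\ad_w]$ acting on $v\in\m$: using the $\Ad(H)$-invariance relation $\alpha^\n([w,u],v)+\alpha^\n(u,[w,v]) = [w,\alpha^\n(u,v)]$ from \eqref{2} (in infinitesimal form), a short rearrangement shows $[\ad_w,\alpha^\n_u] = \alpha^\n_{[w,u]}$, so $[\alpha^\n_u,\ad_w] = -\alpha^\n_{[w,u]}$ with $[w,u]\in[[\m,\m],\m]$; hence this bracket lands in $\alpha^\n_{[[\m,\m],\m]}\subseteq\hola$. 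On the $\alpha^\n_{[[\m,\m],\m]}$ part, since $\star$ is commutative and associative, the operators $\alpha^\n_u$ all commute with one another (this is exactly \eqref{alpuNilp}'s underlying fact, $\alpha^\n_u\ro\alpha^\n_v = \alpha^\n_{u\star v} = \alpha^\n_v\ro\alpha^\n_u$), so $[\alpha^\n_u,\alpha^\n_{z}] = 0$ for any $z$; hence this part of the bracket vanishes.

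The computation that $\hola$ is closed under bracket reuses the same three ingredients: $[\ad_{w},\ad_{w'}] = \ad_{[w,w']}$ with $[w,w']\in[[\m,\m],[\m,\m]]\subseteq[\m,\m]$ (using $[\m,\m]\subseteq\h$ and $[\h,\h]\subseteq\h$, together with the fact that $[[\m,\m],[\m,\m]]$ lands back in $[\m,\m]$ — here I would note $[\m,\m]$ is an $\h$-submodule of $\h$); the mixed bracket $[\ad_w,\alpha^\n_u] = \alpha^\n_{[w,u]}\in\alpha^\n_{[[\m,\m],\m]}$ from above; and the vanishing bracket $[\alpha^\n_u,\alpha^\n_{u'}]=0$. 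Thus $\hola$ is a Lie subalgebra satisfying both conditions. For minimality, let $\mathfrak{b}$ be any Lie subalgebra satisfying \eqref{1LemHolLiAlg} and \eqref{2LemHolLiAlg}. Condition \eqref{1LemHolLiAlg} forces $\ad_{[u,v]}\in\mathfrak{b}$ for all $u,v$, so $\ad_{[\m,\m]}\subseteq\mathfrak{b}$. Then condition \eqref{2LemHolLiAlg} applied with these elements gives $[\alpha^\n_u,\ad_{[\m,\m]}] = \alpha^\n_{[[\m,\m],\m]}\subseteq\mathfrak{b}$, so $\hola\subseteq\mathfrak{b}$, proving $\hola$ is the smallest such subalgebra.

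The step I expect to be the main obstacle is verifying the identity $[\ad_w,\alpha^\n_u] = \alpha^\n_{[w,u]}$ cleanly for $w\in\h$, since it is the hinge of both the condition \eqref{2LemHolLiAlg} check and the minimality argument; it follows by differentiating the $\Ad(H)$-invariance \eqref{2} along a one-parameter subgroup generated by $w$, but one must be careful that $[w,u]\in\m$ (which holds because $[\h,\m]\subseteq\m$) so that $\alpha^\n_{[w,u]}$ is defined, and careful with signs. A secondary subtlety is checking that $[[\m,\m],[\m,\m]]\subseteq[\m,\m]$, which I would justify by observing that $[\m,\m]$ is an ideal-like $\h$-submodule closed under the relevant brackets rather than by direct expansion. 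Once these two algebraic facts are in place, everything else is bookkeeping with the three bracket relations.
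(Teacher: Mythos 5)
Your proof is correct and follows essentially the same route as the paper's: both rest on the three identities $R^\n(u,v)=-\ad_{[u,v]}$, $[\ad_w,\alpha^\n_u]=\alpha^\n_{[w,u]}$, and $[\alpha^\n_u,\alpha^\n_v]=0$, verifying that $\ad_{[\m,\m]}+\alpha^\n_{[[\m,\m],\m]}$ satisfies the two Kobayashi--Nomizu conditions and is contained in any subalgebra satisfying them. The only difference is that you make explicit the closure of this subspace under the bracket (via $[[\m,\m],[\m,\m]]\subseteq[\m,\m]$), a point the paper leaves implicit.
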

\begin{proof}
For a special affine connection $\n$ on $M$, by \eqref{2} and \eqref{4} one has
\begin{equation*}
[\ad_{[u,v]},\alpha^\n_w]=\alpha^\n_{[[u,v],w]},\quad R^\n(u,v)=-\ad_{[u,v]},\et [\alpha^\n_u,\alpha^\n_v]=0,
\end{equation*}
for all $u,v,w\in\m$. Thus the Lie algebra $\ad_{[\m,\m]}+\alpha^\n_{[[\m,\m],\m]}$ satisfies \ref{1LemHolLiAlg} and \ref{2LemHolLiAlg}, so it contains $\hola_{o}^\n$. On the other hand, for $x,y,z,u,v\in\m$
\begin{eqnarray*}
\ad_{[u,v]}+\alpha^\n_{[[x,y],z]}&=&\ad_{[u,v]}+\left[\ad_{[x,y]},\alpha^\n_z\right]\\
&=&R^\n(v,u)+\left[\alpha^\n_z,R^\n(x,y)\right]\in\hola_{o}^\n.
\end{eqnarray*}
This proves the other inclusion, and hence the claim.
\end{proof}

\vskip 0.5cm

{\bf ACKNOWLEDGMENT}
We thank the referee for carefully reading the paper, as well as for his valuable suggestions and helpful corrections which significantly improved the article.

\bibliographystyle{amsplain}

\end{document}